\newcommand{\C}{\mathbb{C}}
\newcommand{\ZZ}{\mathbb{Z}}
\newcommand{\QQ}{\mathbb{Q}}
\newcommand{\NN}{\mathbb{N}}
\newcommand{\PP}{\mathbb{P}}
\newcommand{\OO}{\mathcal O}
\newcommand{\YY}{\mathcal Y}
\newcommand{\JJ}{\mathcal J}
\newcommand{\MM}{\mathcal M}
\newcommand{\FF}{\mathcal F}
\newcommand{\rom}{\romannumeral}
\newcommand{\one}{\mathds{1}}
\DeclareMathOperator{\ide}{id}
\DeclareMathOperator{\ima}{Im}
\DeclareMathOperator{\sym}{Sym}
\DeclareMathOperator{\Gr}{Gr}
\DeclareMathOperator{\Dec}{Dec}
\newtheorem{theorem}{Theorem}[section]
\newtheorem{lemma}[theorem]{Lemma}
\newtheorem{corollary}[theorem]{Corollary}
\newtheorem{proposition}[theorem]{Proposition}
\newtheorem{convention}{Conventions}
\newtheorem{nonumbering}{Theorem}
\newtheorem{nonumberingc}{Corollary}
\newtheorem{nonumberingp}{Proposition}
\theoremstyle{definition}
\newtheorem{remark}[theorem]{Remark}
\newtheorem{definition}[theorem]{Definition}
\newtheorem{nonumberingt}{Acknowledgments}
\begin{document}

\author[Robert Laterveer]
{Robert Laterveer}

\address{Institut de Recherche Math\'ematique Avanc\'ee,
CNRS -- Universit\'e 
de Strasbourg,\
7 Rue Ren\'e Des\-car\-tes, 67084 Strasbourg CEDEX,
FRANCE.}
\email{robert.laterveer@math.unistra.fr}

\title{Algebraic cycles and Gushel--Mukai fivefolds}

\begin{abstract} We show that Gushel--Mukai fivefolds
 admit a multiplicative Chow--K\"unneth decomposition, in the sense of Shen--Vial. 
As a consequence, a certain tautological subring of the Chow ring of powers of these varieties injects into cohomology.
 \end{abstract}

\keywords{Algebraic cycles, Chow groups, motive, Bloch--Beilinson filtration, Beauville's ``splitting property'' conjecture, multiplicative Chow--K\"unneth decomposition, Fano variety, Gushel--Mukai variety}
\subjclass[2010]{Primary 14C15, 14C25, 14C30.}

\maketitle

\section{Introduction}

Given a smooth projective variety $Y$ over $\C$, let $A^i(Y):=CH^i(Y)_{\QQ}$ denote the Chow groups of $Y$ (i.e. the groups of codimension $i$ algebraic cycles on $Y$ with $\QQ$-coefficients, modulo rational equivalence). Intersection product defines a ring structure on $A^\ast(Y)=\bigoplus_i A^i(Y)$, the {\em Chow ring\/} of $Y$ \cite{F}. In the case of K3 surfaces, this ring structure has a peculiar property:

\begin{theorem}[Beauville--Voisin \cite{BV}]\label{K3} Let $S$ be a K3 surface. 
The $\QQ$-subalgebra
  \[  R^\ast(S):=  \bigl\langle  A^1(S), c_j(S) \bigr\rangle\ \ \ \subset\ A^\ast(S) \]
  injects into cohomology under the cycle class map.
  \end{theorem}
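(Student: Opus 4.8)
The plan is to reduce Theorem~\ref{K3} to a single assertion about zero-cycles and then to prove that assertion by exhibiting one distinguished class. Since $R^\ast(S)$ is a graded subalgebra of $A^\ast(S)$ generated by $A^1(S)$ and the Chern classes $c_j(S)$, and $c_1(S)=0$ on a K3 surface, one has $R^0(S)=\QQ\,[S]$, $R^1(S)=A^1(S)$, $R^i(S)=0$ for $i>2$, and $R^2(S)$ is the $\QQ$-span inside $A^2(S)$ of the products $D\cdot D'$ (with $D,D'\in A^1(S)$) together with $c_2(S)$. The cycle class map is injective on $R^0(S)$, and on $R^1(S)=\Pic(S)\otimes\QQ$ it is injective because $H^1(\OO_S)=0$ forces $\Pic^0(S)=0$, so that $\Pic(S)$ embeds into $H^2(S,\ZZ)$. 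On $R^2(S)$ the cycle class map is the degree map into $H^4(S,\QQ)=\QQ$, which is already surjective (e.g. $\deg(C\cdot C)=C^2>0$ for an ample curve $C$); hence it is injective on $R^2(S)$ if and only if $\dim_\QQ R^2(S)\le 1$. So everything comes down to showing that all the products $D\cdot D'$ and $c_2(S)$ lie on one $\QQ$-line of zero-cycles.

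Next I would pin down that line using rational curves. By the theorem of Bogomolov--Mumford, every K3 surface carries rational curves; more precisely, for an ample line bundle $L$ (replaced by a multiple if necessary) the system $|L|$ contains a connected curve $R_0$ every component of which is rational. Points of an irreducible rational curve are rationally equivalent (pull back to $\PP^1$), the components of the connected curve $R_0$ are mutually linked, and every curve on $S$ meets $R_0$ since $L$ is ample. It follows that all points of $S$ lying on rational curves share a single class in $A^2(S)$, which I call $\oo_S$; it has degree $1$. For the products of divisors, note that every class in $A^1(S)$ is a $\QQ$-linear combination of very ample ones, so by bilinearity it suffices to compute $c_1(L_1)\cdot c_1(L_2)$ for $L_1,L_2$ very ample. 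Choosing (after passing to a multiple of $L_1$, which is harmless) a member $C_1'\in|L_1|$ that is a union of rational curves, together with a general smooth $C_2\in|L_2|$, we have $c_1(L_1)\cdot c_1(L_2)=[C_1']\cdot[C_2]$ in $A^2(S)$, and for $C_2$ general this is a transverse collection of $L_1\cdot L_2$ points, each lying on a rational component of $C_1'$ and hence equal to $\oo_S$. Thus $D\cdot D'\in\QQ\,\oo_S$ for all $D,D'$.

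It remains to prove $c_2(S)=24\,\oo_S$. By the first paragraph it suffices to prove the weaker statement $c_2(S)\in\QQ\,\oo_S$ --- that is, that $c_2(T_S)$ is rationally equivalent to a zero-cycle supported on rational curves --- because the coefficient is then forced by $\deg c_2(S)=e(S)=24$. Following Beauville--Voisin, I would establish this by an explicit Chern-class computation on the universal curve $\mathcal C\subset S\times|L|$ over a very ample linear system, where $\mathcal C\to S$ is a projective bundle and $\mathcal C\to|L|$ is a flat family of curves; equivalently, by a careful localization analysis of a pencil after blowing up its base points, combined with the divisor computation above carried out on the blow-up.

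I expect the last step to be the \emph{main obstacle}: the reduction in the first paragraph is formal, and the construction of $\oo_S$ together with the proportionality of divisor products follows readily from the existence of Bogomolov--Mumford rational curves and from the fact that they are all rationally equivalent, whereas showing that $c_2(T_S)$ itself is supported on rational curves requires the substantive geometric input.
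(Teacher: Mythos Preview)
The paper does not give its own proof of Theorem~\ref{K3}; the statement is quoted from Beauville--Voisin \cite{BV} purely as motivation, and the reader is referred there for the argument. So there is nothing in the paper to compare your proposal against.

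That said, your outline is precisely the original Beauville--Voisin strategy: reduce to showing $R^2(S)$ is one-dimensional, construct the distinguished class $\oo_S$ as the class of a point on a rational curve (using Bogomolov--Mumford), show $D\cdot D'\in\QQ\,\oo_S$ by choosing rational-curve representatives for ample classes, and finally prove $c_2(S)=24\,\oo_S$. Your first three steps are correct and complete. You are also right that the last step is the substantive one and that your sketch there is not a proof: the actual argument in \cite{BV} does not proceed via a universal curve over $\vert L\vert$, but rather (for the elliptic case, which suffices after deformation or directly via a different reduction) uses an explicit elliptic fibration and computes $c_2(T_S)$ in terms of the singular fibers, which are rational; alternatively one can argue via the relation $\Delta_S\cdot(p_1^\ast c_1(L)+p_2^\ast c_1(L))$ on $S\times S$ and the decomposition of the small diagonal. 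Your pencil-and-blow-up idea can be made to work but requires care to track $c_2$ of the blow-up against the exceptional divisors and the base locus. As a proof proposal it is an accurate roadmap with the hard step honestly flagged as incomplete.
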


Inspired by the cases of K3 surfaces and abelian varieties, Beauville \cite{Beau3} has conjectured that for certain special varieties, the Chow ring should admit a {\em multiplicative splitting\/}. To make concrete sense of Beauville's elusive ``splitting property conjecture'', Shen--Vial \cite{SV} have introduced the concept of {\em multiplicative Chow--K\"unneth decomposition\/}; 
let us say ``MCK decomposition'' for brevity.

What can one say about the class of special varieties admitting an MCK decomposition ? This class is not yet well-understood. To give some idea: hyperelliptic curves have an MCK decomposition \cite[Example 8.16]{SV}, but the very general curve of genus $\ge 3$ does not have an MCK decomposition \cite[Example 2.3]{FLV2} (for more details, cf. subsection \ref{ss:mck} below). In the present note, we aim to enhance the understanding of this class by identifying some new members:

\begin{nonumbering}[=Theorem \ref{main}] Let $Y$ be a Gushel--Mukai fivefold. Then $Y$ has an MCK decomposition. 
\end{nonumbering}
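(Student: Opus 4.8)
The plan is to exploit the extremely simple shape of the Chow motive $\mathfrak h(Y)$ of a Gushel--Mukai fivefold $Y$. Since $Y$ is a quadric, hence ample, section of $\Gr(2,5)$, the weak and hard Lefschetz theorems combined with the fact that $\Gr(2,5)$ has algebraic cohomology concentrated in even degrees give that $H^i(Y,\QQ)$ is algebraic for every $i\ne 5$; moreover $Y$ is Fano (of index $3$), so $A_0(Y)=\QQ$ and $H^{5,0}(Y)=0$. Consequently $\mathfrak h(Y)$ splits as $\mathfrak h(Y)\cong\bigl(\bigoplus_j\one(-j)^{\oplus b_{2j}}\bigr)\oplus\mathfrak t(Y)$, where $\mathfrak t(Y)=\mathfrak h^5(Y)$ is the only non-Tate summand and satisfies $H^\ast(\mathfrak t(Y))=H^5(Y,\QQ)$. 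In addition $\mathfrak t(Y)$ is small: through a conic/quadric-bundle presentation of $Y$ --- analogous to the one producing the Prym structure on a Gushel--Mukai threefold, cf. Debarre--Kuznetsov --- one sees $\mathfrak t(Y)$ is of abelian type, namely a Tate twist of $\mathfrak h^1$ of an abelian variety $P(Y)$ (which varies algebraically with $Y$). In the end only the Franchetta property stated below enters the proof, so one may replace this last point by that weaker input.

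The argument then has two parts, a ``cohomological'' part and a ``Franchetta'' part. First, $Y$ carries an explicit self-dual, generically defined Chow--K\"unneth decomposition $\{\pi^i_Y\}_{0\le i\le 10}$: for $i\ne 5$ one builds $\pi^i_Y\in A^5(Y\times Y)$ from a basis of $H^i(Y,\QQ)$ consisting of restrictions of Schubert cycles of $\Gr(2,5)$ and its Poincar\'e-dual basis, and sets $\pi^5_Y:=\Delta_Y-\sum_{i\ne 5}\pi^i_Y$ (orthogonality and idempotency follow from the cellular structure of $\Gr(2,5)$ and $A_0(Y)=\QQ$); all these correspondences are restrictions of cycles defined over the entire parameter space of Gushel--Mukai fivefolds. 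Second, and this is the crux, the universal Gushel--Mukai fivefold $\mathcal Y\to B$ --- with $B$ the rational Zariski-open subset of $\PP\bigl(H^0(\Gr(2,5),\OO(2))\bigr)$ parametrising smooth quadric sections --- has the Franchetta property for its fibre powers with at most three factors: a cycle on $\mathcal Y\times_B\cdots\times_B\mathcal Y$ that is fibrewise homologically trivial is fibrewise rationally trivial. Granting this, the theorem is immediate: for $i+j\ne k$, the correspondence $\pi^k_Y\circ\Delta^{\mathrm{sm}}_Y\circ(\pi^i_Y\otimes\pi^j_Y)\in A^{10}(Y\times Y\times Y)$ is generically defined and vanishes in $H^{20}(Y\times Y\times Y)$ (cup product preserves cohomological degree), hence vanishes in the Chow group by the Franchetta property --- which is exactly the definition of an MCK decomposition.

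The main obstacle is the Franchetta property. I would prove it by Voisin's spreading technique and its refinements due to Fu--Laterveer--Vial: a generically defined cycle on a fibre power of $\mathcal Y$ is spread over $B$ and analysed through the Leray filtration, so that the obstruction to rational triviality is located in cohomology groups of $B$ with coefficients in tensor and symmetric powers of the variation of Hodge structure on $H^5$ of the fibres and their products, suitably Tate-twisted. The Tate summands of $\mathfrak h(Y^{\times m})$ contribute nothing: $\Gr(2,5)$ is cellular, so $\Gr(2,5)^{\times m}$ has Chow groups equal to its cohomology, and correspondences transport this to $\mathcal Y^{\times_B m}$. The delicate contributions are those of $\mathfrak t(Y)^{\otimes k}$ for $k\le 3$; here one uses either the big monodromy of the family --- so that these local systems, appropriately twisted, have no invariants and controlled higher cohomology over $B$ --- or the identification $\mathfrak t(Y)\cong\mathfrak h^1(P)(-2)$ with $P$ varying algebraically, which reduces the problem to the Franchetta property for a family of abelian varieties, accessible through the Fourier transform on abelian schemes and the corresponding property for the family of intermediate Jacobians. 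Keeping track of $H^\ast(B)$ --- by working $\mathrm{PGL}_5$-equivariantly, or passing to sufficiently general linear sections of $B$ --- and excluding any unexpected generically defined transcendental classes is where the real work lies.

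It remains to note that Gushel-type Gushel--Mukai fivefolds --- double covers of a hyperplane section of $\Gr(2,5)$ branched along a Gushel--Mukai fourfold --- are handled by running the same scheme for their own universal family, or by a specialisation argument, since they occur as degenerations of ordinary ones in one connected family. The consequence announced in the abstract --- that the subring of $A^\ast(Y^m)$ generated by the pullbacks of $A^1(Y)$, the Chern classes of $Y$ and the partial diagonals injects into cohomology --- then follows from the MCK decomposition together with the Franchetta property for $Y^m$ for all $m$, exactly as in Shen--Vial.
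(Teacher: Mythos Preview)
Your overall architecture is right --- build a generically defined self-dual CK decomposition, observe that the cycles $\Gamma_{ijk}:=\pi^k_Y\circ\Delta_Y^{\rm sm}\circ(\pi^i_Y\times\pi^j_Y)$ are generically defined and homologically trivial for $i+j\ne k$, and kill them by a Franchetta-type statement --- and this is exactly the skeleton of the paper's argument. The gap is in the step you yourself flag as ``where the real work lies'': you assert the Franchetta property for the \emph{triple} fibre product $\YY\times_B\YY\times_B\YY\to B$, and your sketch (big monodromy, or reduction to a Franchetta statement for a family of intermediate Jacobians via $\mathfrak t(Y)\cong\mathfrak h^1(P)(-2)$) is not a proof. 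Neither route is routine: the stratified-projective-bundle description of $\YY^{\times_B m}\to M^m$ over $M=\Gr(2,5)$ only gives clean control for $m\le 2$, and a Franchetta statement for triple products of a $10$-dimensional family of abelian varieties is not something one can quote off the shelf.

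The point you are missing is that the paper never proves Franchetta for $Y^3$; it sidesteps it. The paper only establishes Franchetta for $\YY\to B$ and $\YY\times_B\YY\to B$ (this uses the stratified bundle structure plus condition (c3), that $\ima(A^m(M)\to A^m(Y))$ injects into cohomology). Then the $\Gamma_{ijk}$ are treated in two cases. If at least one of $i,j,k$ is even, the corresponding projector is a sum of decomposable pieces $\alpha\times\beta$, so after composing with it one lands (via Lieberman) in a direct sum of copies of $A^\ast(Y\times Y)$; the Franchetta property for $Y\times Y$ then gives the vanishing. The only case that genuinely lives on $Y^3$ is $i=j=k=5$. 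Here the paper uses the input you also invoke --- a split injection $h^5(Y)\hookrightarrow h^1(C)(-2)$ for a \emph{curve} $C$ (coming from the family of $\sigma$-planes in $Y$, cf.\ Proposition~\ref{van}) --- but in a sharper way: this yields an injection
\[
(\pi^5_Y\times\pi^5_Y\times\pi^5_Y)_\ast A^{10}(Y^3)\ \hookrightarrow\ A^{4}(C\times C\times C),
\]
and the right-hand side is zero for dimension reasons. So $\Gamma_{5,5,5}=0$ with no Franchetta input on $Y^3$ whatsoever. This dimension trick is precisely why the criterion requires $\dim Y=2m-1\ge 5$ (for $m=2$ one would land in $A^3(C^3)$, which need not vanish; cf.\ Remark~\ref{pity} on GM threefolds).

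In short: your proposal is correct \emph{modulo} a Franchetta statement for $Y^3$ that you have not established; the paper's contribution is exactly to show that this statement is unnecessary, replacing it by Franchetta for $Y^2$ together with the vanishing $A^{m+1}(C^3)=0$. Your treatment of the special (non-ordinary) case by specialisation matches the paper's spread argument.
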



Gushel--Mukai varieties come in two flavours: ordinary and special.
An ordinary Gushel--Mukai fivefold is a smooth dimensionally transverse intersection
  \[ Y=\Gr(2,5)\cap Q\ \ \ \subset\ \PP^9 \ \]
 of the Grassmannian $\Gr(2,5)$ (of 2-dimensional subspaces of a 5-dimensional vector space) and a quadric $Q$ (with respect to the Pl\"ucker embedding); these varieties have been studied at least since 1929 under the name of ``quadratic complexes of lines'' \cite{Seg}, \cite{Sem}, \cite{Roth}, \cite[Chapter 10]{Dol}.
For the definition of special Gushel--Mukai fivefolds, cf. subsection \ref{ss:gm} below. 

To prove Theorem \ref{main}, we have devised a general criterion (Proposition \ref{crit}), which we hope may be useful in other cases. As an illustration of this criterion, we show that certain Fano varieties of dimension 17 also have an MCK decomposition (Theorem \ref{main2}). In order to show that Gushel--Mukai fivefolds verify the criterion, we prove the following which may be of independent interest:

\begin{nonumberingp}[=Proposition \ref{van}] Let $Y$ be a Gushel--Mukai fivefold. The cycle class map
   \[ A^j_{}(Y)\ \to\ H^{2j}(Y,\QQ) \]
   is injective for all $j\not=3$.
   In particular, $Y$ has finite-dimensional motive (in the sense of \cite{Kim}).
\end{nonumberingp}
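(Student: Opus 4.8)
The plan is to produce a self-dual Chow--K\"unneth decomposition
\[ h(Y)=h^0(Y)\oplus h^2(Y)\oplus h^4(Y)\oplus h^5(Y)\oplus h^6(Y)\oplus h^8(Y)\oplus h^{10}(Y) \]
of the Chow motive of $Y$ in which every summand except $h^5(Y)$ is a finite direct sum of Lefschetz motives $\LLL^{\otimes r}$, while $h^5(Y)\cong\mathfrak h^1(P)(-2)$ for an abelian variety $P$ isogenous to a direct summand of a Jacobian (so that $\mathfrak h^1(P)$ is a direct summand of the $\mathfrak h^1$ of a curve $C$). This is enough: a sum of Lefschetz motives has Chow groups equal to, and injecting into, its cohomology, whereas $A^i\!\bigl(h^5(Y)\bigr)=A^{i-2}\!\bigl(\mathfrak h^1(P)\bigr)$ is a direct summand of $A^{i-2}\!\bigl(\mathfrak h^1(C)\bigr)$, which vanishes for $i\neq 3$ because $A^0(\mathfrak h^1(C))=0$ and $CH^{\ge 2}$ of a curve is $0$. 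Hence for $i\neq 3$ we get $A^i(Y)=A^i\!\bigl(h^{2i}(Y)\bigr)$, which injects into $H^{2i}(Y,\QQ)$; and $h(Y)$ is finite--dimensional in the sense of Kim, being a sum of Lefschetz motives (evenly finite--dimensional) and of $\mathfrak h^1(P)(-2)$ (oddly finite--dimensional, since $\mathfrak h^1$ of an abelian variety is $\wedge$--finite).

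Producing the even part is routine. For an ordinary Gushel--Mukai fivefold $Y=\Gr(2,5)\cap Q$, the weak Lefschetz theorem for the ample divisor $Y\subset\Gr(2,5)$ identifies $H^0,H^2,H^4$ of $Y$ with Schubert cohomology of $\Gr(2,5)$, and the hard Lefschetz theorem on $Y$ then shows $H^6,H^8,H^{10}$ are spanned by products of the hyperplane class with Schubert classes; so $H^{\ast}(Y,\QQ)$ is algebraic in every even degree and concentrated in degree $5$ otherwise. For a special Gushel--Mukai fivefold, realised as a double cover $\pi\colon Y\to Z$ of a smooth hyperplane section $Z=\Gr(2,5)\cap\PP^8$ branched along a Gushel--Mukai fourfold, the covering involution splits $H^{\ast}(Y,\QQ)=\pi^{\ast}H^{\ast}(Z,\QQ)\oplus H^{\ast}(Y,\QQ)^{-}$; since ordinary and special fivefolds are deformation equivalent, a Betti number count forces $H^{\ast}(Y,\QQ)^{-}$ to lie entirely in degree $5$, while $H^{\ast}(Z,\QQ)$ is algebraic, $Z$ being the odd symplectic Grassmannian $\SGr(2,5)$, which has a cellular decomposition. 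From these algebraic classes and the Lefschetz operator one builds the projectors $\pi^0,\pi^2,\pi^4$ and their transposes in the standard way, the associated summands being sums of $\LLL^{\otimes r}$. The three easy Chow degrees are handled directly: $A^0(Y)=\QQ$; $A^5(Y)=\QQ$, as the Fano variety $Y$ is rationally connected so $CH_0(Y)=\ZZ$; and $A^1(Y)=\Pic(Y)_\QQ\hookrightarrow H^2(Y,\QQ)$ since $H^1(\OO_Y)=H^2(\OO_Y)=0$.

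The heart of the matter is the summand $h^5(Y)$. The Hodge structure $H^5(Y,\QQ)$ has $h^{5,0}=h^{4,1}=0$, so it is a Tate twist of a weight--$1$ Hodge structure and $J^3(Y)$ is an abelian variety. To lift this to motives I would use a Gushel--Mukai threefold ``period partner'' $X$ of $Y$ --- one sharing the same Lagrangian data --- together with a correspondence $\Gamma\in A^4(X\times Y)$ inducing an isomorphism of Hodge structures $H^3(X,\QQ)(-1)\xrightarrow{\ \sim\ }H^5(Y,\QQ)$, for instance the one carried by the Fourier--Mukai kernel realising the equivalence of Gushel--Mukai categories $\mathcal A_X\simeq\mathcal A_Y$. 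A Gushel--Mukai threefold in turn carries a conic bundle structure over $\PP^2$ (project from a line lying on it), whence $h^3(X)\cong\mathfrak h^1(P)(-1)$ with $P$ the relevant Prym variety, a direct summand of the $\mathfrak h^1$ of the (double cover of the) discriminant curve. Since $\mathfrak h^1(P)$ is then finite--dimensional, the correspondence $\Gamma$, being an isomorphism on cohomology, induces an isomorphism of motives $h^5(Y)\cong h^3(X)(-1)\cong\mathfrak h^1(P)(-2)$, as desired.

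The main obstacle is precisely this last step. One must construct and control the correspondence with a threefold partner --- taking care of the fivefolds whose Lagrangian data does not yield a smooth partner (where one can instead deform and specialise, or replace the partner by a birational quadric--surface fibration $Y\dashrightarrow\PP^3$ and analyse its discriminant), check its compatibility with the Chow--K\"unneth projectors, and invoke finite--dimensionality to promote the cohomological isomorphism to a motivic one. Everything else --- the algebraicity of the even cohomology, the construction of the even Chow--K\"unneth summands as sums of Lefschetz motives, and the bookkeeping in the easy degrees --- is formal.
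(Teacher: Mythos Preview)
Your treatment of the even Chow--K\"unneth summands is fine and essentially matches the paper's Lemma~\ref{ck}. The gap is in the odd part. You assert that since $\mathfrak h^1(P)$ is finite-dimensional, the cohomological isomorphism induced by $\Gamma$ lifts to an isomorphism of Chow motives $h^5(Y)\cong h^3(X)(-1)$. This is circular: Kimura's nilpotence theorem upgrades a homologically trivial endomorphism to a nilpotent one only on a motive \emph{already known} to be finite-dimensional. From finite-dimensionality of the source $h^3(X)(-1)$ alone you obtain at best a split injection $h^3(X)(-1)\hookrightarrow h^5(Y)$; the complementary summand has vanishing cohomology but a priori non-trivial Chow groups, and nothing in your outline rules out such a phantom. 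In particular you have not established $A^4_{\rm hom}(Y)=0$, which is precisely the substantive case. The same circularity recurs in your last paragraph when you propose to ``invoke finite-dimensionality to promote the cohomological isomorphism to a motivic one'': the finite-dimensionality you would need is that of $h^5(Y)$, which is part of what is to be proved.

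The paper circumvents this by a spread argument that never presupposes finite-dimensionality. One works over the universal family $\YY\to B$ of ordinary Gushel--Mukai fivefolds and uses, in place of your threefold partner, the curve $F=F^2_\sigma(Y)$ of $\sigma$-planes contained in $Y$ together with the universal plane $P\subset F\times Y$ (Lemma~\ref{ldk}, from Debarre--Kuznetsov). A right inverse $Q$ to $P_\ast$ on $H^5$ can be taken generically defined, so $(\Delta_Y-P\circ Q)\circ\pi^5_Y$ is a \emph{generically defined}, homologically trivial class in $A^5(Y\times Y)$. The Franchetta-type Proposition~\ref{Frtype}, proved via the stratified projective bundle structure of $\YY\times_B\YY\to\Gr(2,5)\times\Gr(2,5)$, forces any such class to be decomposable, i.e.\ to lie in $A^\ast(Y)\otimes A^\ast(Y)$. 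Since decomposable correspondences act trivially on $A^\ast_{\rm AJ}$, the identity of $A^\ast_{\rm AJ}(Y)$ factors through $A^\ast_{\rm AJ}(F)$ with $F$ a curve, giving $A^\ast_{\rm AJ}(Y)=0$ outright; the special case is then handled by spreading over the larger family. Finite-dimensionality is a \emph{consequence} (via \cite[Theorem~4]{43}, or simply because $h(Y)$ is now a direct summand of $h(F)(-2)\oplus\bigoplus\one(\ast)$), not an input.
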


The existence of an MCK decomposition has remarkable intersection-theoretic consequences. This is exemplified by the following corollary, which is about a certain {\em tautological subring\/} of the Chow ring of powers of $Y$:

\begin{nonumberingc}[=Corollary \ref{cor1}]
Let $Y$ be a Gushel--Mukai fivefold, and $m\in\NN$. Let
  \[ R^\ast(Y^m):=\Bigl\langle (p_i)^\ast A^1(Y), \,  (p_i)^\ast A^2(Y), \, (p_{ij})^\ast(\Delta_Y)\Bigr\rangle\ \subset\ \ \ A^\ast(Y^m)   \]
  be the $\QQ$-subalgebra generated by (pullbacks of) divisors, codimension 2 cycles and the diagonal $\Delta_Y\in A^5(Y\times Y)$. (Here $p_i$ and $p_{ij}$ denote the various projections from $Y^m$ to $Y$ resp. to $Y\times Y$).
  The cycle class map induces injections
   \[ R^\ast(Y^m)\ \hookrightarrow\ H^\ast(Y^m,\QQ)\ \ \ \hbox{for\ all\ }m\in\NN\ .\]
   \end{nonumberingc}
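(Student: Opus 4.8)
The plan is to derive Corollary \ref{cor1} formally from Theorem \ref{main} together with Proposition \ref{van}, following the now-standard passage from a multiplicative Chow--K\"unneth decomposition to the injectivity of a tautological ring (\cite{SV} and its many descendants). By \cite{SV}, a product of varieties with an MCK decomposition again has one; thus Theorem \ref{main} equips $Y^m$ with an MCK decomposition, hence with a bigrading
  \[ A^i(Y^m)=\bigoplus_{j}A^i_{(j)}(Y^m),\qquad A^i_{(j)}(Y^m)\cdot A^{i'}_{(j')}(Y^m)\subseteq A^{i+i'}_{(j+j')}(Y^m), \]
compatible with the intersection product, and such that each pullback $(p_i)^\ast$ and $(p_{ij})^\ast$ is a morphism of bigraded rings (being exterior product with fundamental classes, which are of grade $0$). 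It therefore suffices to (i) place the three families of generators of $R^\ast(Y^m)$ inside the grade-$0$ part $A^\ast_{(0)}(Y^m)$, and then (ii) prove that the cycle class map is injective on $A^\ast_{(0)}(Y^m)$.

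Step (i) is quick. The cycle class map kills $A^i_{(j)}(Y)$ for every $j\neq 0$ (the class of a codimension-$i$ cycle lies in $H^{2i}(Y)$, whereas $A^i_{(j)}(Y)$ is detected in $H^{2i-j}(Y)$), so the injectivity in Proposition \ref{van} forces $A^1(Y)=A^1_{(0)}(Y)$ and $A^2(Y)=A^2_{(0)}(Y)$ (vanishing of $A^i_{(j)}(Y)$ for $j<0$ being part of the MCK package). For the diagonal one invokes the general fact that an MCK decomposition $\{\pi^Y_i\}$ satisfies $\pi^Y_i\in A^5_{(0)}(Y\times Y)$ for all $i$ \cite{SV}, so $\Delta_Y=\sum_i\pi^Y_i\in A^5_{(0)}(Y\times Y)$. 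Since the pullbacks respect the grading, $R^\ast(Y^m)\subseteq A^\ast_{(0)}(Y^m)$.

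Step (ii) is the heart of the matter, and the key input is finite-dimensionality of the motive of $Y$, which Proposition \ref{van} supplies. Combining Proposition \ref{van} (injectivity for all $j\neq 3$), finite-dimensionality, and the fact that $H^\ast(Y,\QQ)$ is algebraic outside the middle degree $5$, one obtains a refined Chow--K\"unneth decomposition in which the only summand of $\mathfrak h(Y)$ that is not a sum of Tate motives is a finite-dimensional motive $\mathfrak h^5(Y)$ with cohomology concentrated in the odd degree $5$. On passing to $Y^m$, a weight/parity count shows that the non-Lefschetz part of $A^\ast_{(0)}(Y^m)$ can only receive contributions from \emph{even} tensor powers $\mathfrak h^5(Y)^{\otimes 2k}$; and the grade-$0$ Chow groups of these are controlled by finite-dimensionality (via a Kimura-type nilpotence argument \cite{Kim} for homologically trivial cycles lying in the finite-dimensional realm spanned by the tautological classes), so the cycle class map is injective there. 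This yields $R^\ast(Y^m)\hookrightarrow H^\ast(Y^m,\QQ)$.

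The main obstacle is precisely step (ii): since the middle cohomology $H^5(Y)$ of a Gushel--Mukai fivefold is non-zero, one cannot reduce to a variety with ``Lefschetz motive''. Everything built from the generators $(p_i)^\ast A^1(Y)$ and $(p_i)^\ast A^2(Y)$ stays inside the Lefschetz part and is innocuous; the real point is to show that the products of the $\pi^Y_5$-components hidden inside $(p_{ij})^\ast\Delta_Y$ generate no more in the Chow ring of $Y^m$ than their classes generate in $H^\ast(Y^m,\QQ)$. This is exactly what finite-dimensionality, proved in Proposition \ref{van}, is for; granting it, the remainder of the argument runs as in \cite{SV} and the subsequent literature.
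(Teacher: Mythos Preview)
Your step (i) is essentially correct, though the parenthetical ``vanishing of $A^i_{(j)}(Y)$ for $j<0$ being part of the MCK package'' is not: this is Murre's conjecture B, which is not a formal consequence of having an MCK decomposition. In the case at hand it does hold, because $h^j(Y)$ is a sum of Tate motives for $j\neq 5$ (Lemma~\ref{ck}) and $h^5(Y)$ is a direct summand of $h^1(C)(-2)$ for a curve $C$ (as extracted in the proof of Proposition~\ref{van}); one checks directly from this that $A^1(Y)=A^1_{(0)}(Y)$ and $A^2(Y)=A^2_{(0)}(Y)$. The assertion $\Delta_Y\in A^5_{(0)}(Y\times Y)$ is indeed a general feature of MCK decompositions.

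The real problem is step (ii). You assert that the cycle class map is injective on $A^\ast_{(0)}(Y^m)$, and justify this with a ``weight/parity count'' and a ``Kimura-type nilpotence argument''. But injectivity of the cycle class map on $A^\ast_{(0)}$ is Murre's conjecture D; it is \emph{not} a consequence of Kimura finite-dimensionality, and it is not known for powers of an arbitrary variety with finite-dimensional motive (already for $C\times C$ with $C$ a curve of genus $\ge 3$ it is open). Finite-dimensionality gives nilpotence of homologically trivial correspondences, not their vanishing. Your last paragraph narrows the claim to ``cycles lying in the finite-dimensional realm spanned by the tautological classes'', which is circular: that is precisely the statement of the corollary, and you have not explained what relations among the generators hold in cohomology, nor why those particular relations lift to rational equivalence.

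The paper's argument avoids this trap by following Yin's method \cite{Yin}, \cite{FLV3}: one first computes an explicit presentation of the \emph{image} $\bar R^\ast(Y^m)\subset H^\ast(Y^m,\QQ)$ as a quotient of the free algebra on $o_i,h_i,c_i,\tau_{ij}$ (with $\tau:=\pi^5_Y$) by four families of relations, and then checks each relation separately in $A^\ast(Y^m)$. The relations on a single factor come from the Franchetta property for $\YY\to B$; the quadratic relations $\tau_{ij}\cdot h_i=\tau_{ij}\cdot c_i=0$ and $\tau_{ij}^2=-20\,o_io_j$ from the Franchetta property for $\YY\times_B\YY\to B$ (Corollary~\ref{Frord}); the triple relation $\tau_{ij}\tau_{ik}=\tau_{jk}\,o_i$ is exactly the MCK relation $\pi^{10}_Y\circ\Delta_Y^{\rm sm}\circ(\pi^5_Y\times\pi^5_Y)=\Delta_Y^{\rm sm}\circ(\pi^5_Y\times\pi^5_Y)$; and the single remaining relation, living in $A^\ast(Y^{22})$, is the vanishing of $\sym^{22}h^5(Y)$, which \emph{is} Kimura finite-dimensionality (Proposition~\ref{van}). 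So finite-dimensionality enters, but only to kill one specific relation after the presentation has reduced the problem to a finite list; it does not, and cannot, give blanket injectivity on $A^\ast_{(0)}(Y^m)$.
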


That is, Gushel--Mukai fivefolds behave like K3 surfaces (cf. Theorem \ref{K3}) and hyperelliptic curves (cf. Remark \ref{tava} below), from the point of view of intersection theory.

It seems reasonable to expect that Gushel--Mukai threefolds (and Gushel--Mukai fourfolds and sixfolds) also have an MCK decomposition; establishing this is strictly more difficult than the dimension $5$ case (cf. Remark \ref{pity}).

 \vskip0.5cm

\begin{convention} In this note, the word {\sl variety\/} will refer to a reduced irreducible scheme of finite type over $\C$. A {\sl subvariety\/} is a (possibly reducible) reduced subscheme which is equidimensional. 

{\bf All Chow groups will be with rational coefficients}: we denote by $A_j(Y)$ the Chow group of $j$-dimensional cycles on $Y$ with $\QQ$-coefficients; for $Y$ smooth of dimension $n$ the notations $A_j(Y)$ and $A^{n-j}(Y)$ are used interchangeably. 
The notations $A^j_{\rm hom}(Y)$ and $A^j_{\rm AJ}(Y)$ will be used to indicate the subgroup of homologically trivial (resp. Abel--Jacobi trivial) cycles.

The contravariant category of Chow motives (i.e., pure motives with respect to rational equivalence as in \cite{Sc}, \cite{MNP}) will be denoted 
$\MM_{\rm rat}$.
\end{convention}

\section{Preliminaries}

\subsection{Gushel--Mukai fivefolds}
\label{ss:gm}

Gushel--Mukai varieties have been intensively studied \cite{D}, \cite{Per}, \cite{DK}, \cite{DK1}, \cite{DK3}, \cite{DK2}, \cite{KP}. In many senses, Gushel--Mukai varieties behave like cubic hypersurfaces; in particular, the theory of Gushel--Mukai fivefolds closely parallels that of cubic fivefolds (for instance, both have cohomology ``of curve type'').

\begin{definition}[Debarre--Kuznetsov \cite{DK}] A Gushel--Mukai variety is a variety $Y$ obtained as a smooth dimensionally transverse intersection
  \[ Y= \operatorname{CGr}(2,V_5)\cap \PP(W)\cap Q\ \ \ \ \subset\ \PP(\wedge^2 V_5\oplus \C)\cong \PP^{10}\ ,\]
  where $ \operatorname{CGr}(2,V_5)$ is the cone over the Grassmannian  $\operatorname{Gr}(2,V_5)$ (of $2$-dimensional subspaces in a fixed $5$-dimensional vector space $V_5$), and $\PP(W)$ and $Q$ are a linear subspace resp. a quadric.
  
  A Gushel--Mukai variety is called {\em special\/} if it contains the vertex of $ \operatorname{CGr}(2,V_5)$, and {\em ordinary\/} if it is not special.
  \end{definition}
  
  \begin{remark} For simplicity, we restrict to {\em smooth\/} Gushel--Mukai varieties (for the general set-up including singularities, cf. \cite{DK}). Gushel--Mukai varieties have dimension at most $6$. Examples of Gushel--Mukai varieties are: Clifford-general curves of genus $6$; Brill--Noether general polarized $K3$ surfaces of degree $10$ (i.e. genus $6$); smooth prime Fano threefolds of degree $10$ (i.e. genus $6$). 
  
 It is possible to give an intrinsic characterization of Gushel--Mukai varieties \cite[Theorem 2.3]{DK}.
 
 Projecting from the vertex of the cone, one finds that an ordinary Gushel--Mukai variety is isomorphic to a smooth dimensionally transverse intersection
   \[ Y=\Gr(2,V_5)\cap \PP(W^\prime)\cap Q^\prime\ \ \ \subset\ \PP(\wedge^2 V_5)\cong\PP^9\ ,\]
   with $W^\prime\subset \wedge^2 V_5$ a linear subspace (isomorphic to $W$) and $Q^\prime$ a quadric. A special Gushel--Mukai variety is a double cover of a linear section of the Grassmannian.
  \end{remark}

  In this note, we will be interested in Gushel--Mukai fivefolds. 

\begin{proposition}\label{dm} Let $Y$ be a Gushel--Mukai fivefold.

\noindent
(0) $Y$ is a Fano variety with Picard number $1$, index $3$ and degree 10 (and conversely, every smooth Fano fivefold with Picard number $1$, index $3$ and degree 10
is a Gushel--Mukai fivefold).

\noindent
(\rom1) $Y$ is rational.

\noindent
(\rom2)
The Hodge diamond of $Y$ is
    \[ \begin{array}[c]{ccccccccccc}
      &&&&& 1 &&&&&\\
      &&&&0&&0&&&&\\
      &&&0&&1&&0&&&\\
      &&0&&0&&0&&0&&\\
      &0&&0&&2&&0&&0&\\
      0&&0&&10&&10&&0&&0\\
            &0&&0&&2&&0&&0&\\     
      &&0&&0&&0&&0&&\\
       &&&0&&1&&0&&&\\
        &&&&0&&0&&&&\\      
        &&&&& 1 &&&&&\\
\end{array}\]
\end{proposition}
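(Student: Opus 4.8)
All three statements are standard; the plan is to reduce them to the geometry of the Grassmannian $G:=\Gr(2,V_5)$ (which has dimension $6$, Pl\"ucker-polarized canonical bundle $\omega_G\cong\OO_G(-5)$, and degree $5$) together with the Lefschetz hyperplane and Grothendieck--Lefschetz theorems, invoking \cite{DK} (and the classical references \cite{Seg,Sem,Roth}, \cite[Chapter 10]{Dol}) for the one genuinely non-formal point.

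For (0), I would argue case by case. In the ordinary case $Y=G\cap Q\subset\PP^9$ is a smooth divisor in the ample linear system $|\OO_G(2)|$, so adjunction gives $\omega_Y\cong\OO_Y(-5+2)=\OO_Y(-3)$ and $\deg Y=2\deg G=10$; since $\dim G=6\ge 4$, the Grothendieck--Lefschetz theorem yields $\Pic(Y)\cong\Pic(G)\cong\ZZ$, generated by the hyperplane class. In the special case $Y$ is a double cover $\pi\colon Y\to X$ of the hyperplane section $X=G\cap\PP^8$ (which has $\dim X=5$, $\omega_X\cong\OO_X(-4)$, $\deg X=5$), branched along a quadric section of $X$; the canonical bundle formula for double covers gives $\omega_Y\cong\pi^\ast\OO_X(-4+1)=\pi^\ast\OO_X(-3)$, while the projection formula gives $\int_Y(\pi^\ast\OO_X(1))^5=2\deg X=10$. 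In both cases $-K_Y$ equals three times the hyperplane class $H$, which satisfies $H^5=10$; that $Y$ has Picard number $1$ follows in the ordinary case from Grothendieck--Lefschetz, and in general from $b_2(Y)=1$ (assertion (ii)). So $Y$ is Fano of Picard number $1$, index $3$ and degree $10$. For the converse, I would quote the classification of prime Fano varieties of this numerical type (\cite[\S 2]{DK}, \cite{Per}).

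Assertion (i) --- rationality --- is classical in the ordinary case (the ``quadratic complexes of lines'': one exhibits an explicit birational parametrisation, see \cite{Seg,Sem,Roth}, \cite[Chapter 10]{Dol}); in general I would deduce it from \cite{DK}, or from the fact that every Gushel--Mukai fivefold is birational to an ordinary one.

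For (ii), I would first record that $G$ has torsion-free cohomology concentrated in even degrees, with $(b_0,b_2,b_4,b_6,b_8,b_{10},b_{12})(G)=(1,1,2,2,2,1,1)$. In the ordinary case the Lefschetz hyperplane theorem applied to the ample divisor $Y\subset G$ gives $H^k(Y,\QQ)\cong H^k(G,\QQ)$ for $k\le 4$, and Poincar\'e duality on the fivefold $Y$ then pins down $H^k(Y,\QQ)$ for $k\ge 6$; in the special case one argues similarly after decomposing $H^\ast(Y,\QQ)$ into the part fixed by the covering involution, which is $H^\ast(X,\QQ)$, and its orthogonal complement. In either case the only group left to compute is the middle one, $H^5(Y,\QQ)$, which by the above is pure vanishing cohomology (since $H^5(G,\QQ)=0=H^5(X,\QQ)$). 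The hard part --- the one step that is not a formal consequence of Lefschetz and adjunction --- is to show that this Hodge structure has level one (``curve type''), with $h^{3,2}=h^{2,3}=10$ and all remaining Hodge numbers zero; for this I would quote the computation of Debarre--Kuznetsov \cite{DK}. This is the exact analogue of the classical fact that cubic fivefolds have cohomology of curve type, and it is what ultimately makes Gushel--Mukai fivefolds amenable to the methods of the present note.
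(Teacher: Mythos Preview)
Your proposal is correct. The paper's own proof consists entirely of citations: point (0) is attributed to Mukai via \cite[Theorem 1.1]{D}, point (\rom1) to \cite[p.~96]{Roth} or \cite[Proposition 4.2]{DK}, and point (\rom2) to \cite[Proposition 3.1]{DK1} (with \cite{Nag} and \cite{FM0} as supplementary references). You instead sketch the underlying arguments --- adjunction and Grothendieck--Lefschetz for (0), the classical birational parametrisation for (\rom1), and Lefschetz hyperplane plus Poincar\'e duality for (\rom2), reducing everything to the single non-formal input $h^{3,2}(Y)=10$. This is a genuinely more informative write-up, and everything you say is standard and accurate; the only minor points are that your citation for the Hodge numbers should be \cite{DK1} rather than \cite{DK}, and that in the special case the vanishing $H^5(X,\QQ)=0$ for the linear section $X=\Gr(2,5)\cap H$ deserves a word of justification (or a reference) rather than a bare assertion.
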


\begin{proof} Point (0) is attributed to Mukai \cite[Theorem 1.1]{D}.

\noindent
(\rom1) This is \cite[p. 96]{Roth} or \cite[Proposition 4.2]{DK}. 

\noindent
(\rom2) This is \cite[Proposition 3.1]{DK1} (cf. also \cite[Introduction]{Nag} and \cite[Section 3.1]{FM0}).
\end{proof}

Just as Gushel--Mukai fourfolds (and sixfolds) are related to double EPW sextics, Gushel--Mukai fivefolds (and threefolds) are related to ``double EPW surfaces'' (a double EPW surface is a certain double cover of the singular locus of an EPW sextic, cf. \cite[Theorem 5.2]{DK4} and \cite{DK2}).

\begin{theorem}[Debarre--Kuznetsov \cite{DK2}]\label{pp} Let $Y$ be a general Gushel--Mukai fivefold. There exists a double EPW surface $S$ and a canonical (correspondence-induced) isomorphism
  \[  H_5(Y,\ZZ)\ \xrightarrow{\cong}\ H_1(S,\ZZ)\ .\]
  This induces an isomorphism of principally polarized abelian varieties
    \[ \JJ^3(Y)\ \xrightarrow{\cong}\ \operatorname{Alb}(S) \]
    from the intermediate jacobian of $Y$ to the Albanese variety of $S$.
\end{theorem}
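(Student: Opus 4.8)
The plan is to recast the assertion as a comparison of polarized integral Hodge structures — note that $H^5(Y,\QQ)$ is ``of curve type'' (Proposition \ref{dm}(\rom2)), so its effective weight-one part, suitably Tate-twisted, should match $H^1(S,\QQ)$ — and to reduce that comparison to the (classical) three-dimensional case, keeping every identification correspondence-induced. \emph{Step 1: Lagrangian data and the EPW surface.} One attaches to $Y$ its Lagrangian data $(V_6,V_5,A)$: a $6$-dimensional space $V_6$, a hyperplane $V_5\subset V_6$, and a Lagrangian subspace $A\subset\wedge^3V_6$ for the wedge form $\wedge^3V_6\times\wedge^3V_6\to\wedge^6V_6\cong\C$, from which $Y$ is recovered as a linear-and-quadratic section \cite{DK}, \cite{DK4}; here ``$Y$ general'' corresponds to ``$A$ general'', so that in the EPW stratification $\PP(V_6)=Y_A^{\ge1}\supset Y_A^{\ge2}\supset Y_A^{\ge3}$ the locus $Y_A^{\ge2}$ is a smooth connected surface and $Y_A^{\ge3}$ is at most finite. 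The double EPW surface $S$ is the canonical connected double cover of $Y_A^{\ge2}$ supplied by the EPW construction \cite{DK4}, \cite{DK2}; the decisive point is that $S$ depends only on $A$ and not on the hyperplane $V_5$, hence is shared by all Gushel--Mukai varieties of dimensions $3$ through $6$ attached to the same $A$. A preliminary Hodge-theoretic study of $S$ — using a resolution of $Y_A^{\ge2}$ inside $\PP(V_6)$, or citing \cite{DK2} — gives $h^{1,0}(S)=10=\dim\JJ^3(Y)$, the basic numerical consistency check.

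\emph{Step 2: reduction to Gushel--Mukai threefolds.} The heart of the matter is an isomorphism of polarized integral Hodge structures
  \[ H^5(Y,\ZZ)_{\mathrm{prim}}(2)\ \cong\ H^3(X,\ZZ)(1)\ , \]
  induced by an algebraic correspondence, where $X$ is a Gushel--Mukai threefold attached to the same Lagrangian subspace $A$ (a \emph{generalized partner} of $Y$, of dimension $3$). I see two ways to reach it. Homologically: $D^b(Y)$ has a semiorthogonal decomposition into twists of Plücker-type bundles and a residual Gushel--Mukai category $\mathcal{A}_Y$, and the analysis of Kuznetsov--Perry \cite{KP} shows that for general $Y$ this category $\mathcal{A}_Y$ depends only on $A$; in particular $\mathcal{A}_X\simeq\mathcal{A}_Y$ by a Fourier--Mukai equivalence, which yields an isomorphism of Hochschild homologies, hence of the two ``middle'' Hodge structures, realized by the kernel and so defined over $\ZZ$. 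Geometrically: exhibit a two-dimensional family of conics on $Y$, parametrized by a variety dominating $S$, and compare it with the Fano surface of conics on $X$, this family furnishing the correspondence directly. Either way one then invokes the classical description of the intermediate Jacobian of a Gushel--Mukai threefold — due to Logachev, and revisited by Debarre--Iliev--Manivel — namely $\JJ(X)\cong\operatorname{Alb}(S)$ as principally polarized abelian varieties, via the Abel--Jacobi map of the surface of conics.

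\emph{Step 3: assembly.} Combining the two previous steps, $H_5(Y,\ZZ)\cong H^5(Y,\ZZ)$ becomes isomorphic, as an integral Hodge structure up to Tate twist, to $H_1(S,\ZZ)\cong H^1(S,\ZZ)$, and the isomorphism is correspondence-induced; passing to the associated complex tori gives $\JJ^3(Y)\cong\operatorname{Alb}(S)$. It then remains to verify that the cup-product polarization on $H^5(Y)$ is carried to the theta polarization of $\operatorname{Alb}(S)$ and that both are \emph{principal}: principality on the surface side is built into the construction of $S$ and into the threefold case, and transporting it across the correspondence without losing an integral index is a lattice-discriminant computation, to be carried out by hand or extracted from \cite{DK2}.

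The genuine difficulty is Step 2: showing that the ``interesting'' cohomology of a Gushel--Mukai fivefold is the \emph{same} Hodge structure — integrally, and through an explicit correspondence — as that of a companion threefold. On the derived-category side this rests on the delicate dependence of the Gushel--Mukai category on the Lagrangian data, including the interplay between ordinary and special varieties and the need for an honest Fourier--Mukai kernel so that the equivalence acts on integral cohomology; on the geometric side it requires controlling the full two-dimensional family of conics on $Y$ and its incidence with $S$, together with a weak-Lefschetz-type argument to see that the induced cohomology map is an isomorphism rather than merely injective. A secondary difficulty is the Hodge theory of the double EPW surface itself — establishing $h^{1,0}(S)=10$ and realizing its weight-one Hodge structure by explicit algebraic cycles — which is needed both to set up the comparison and to confirm that the resulting polarization is exactly principal.
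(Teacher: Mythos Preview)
The paper does not give a proof of this theorem at all: its ``proof'' is a one-line citation to \cite[Theorem 1.1]{DK2}, followed by the remark that only a much weaker ingredient is actually needed in the present note, namely Lemma~\ref{ldk}: for general $Y$, the universal $\sigma$-plane over the smooth curve $F^2_\sigma(Y)$ induces a surjection $H_1(F^2_\sigma(Y),\ZZ)\twoheadrightarrow H_5(Y,\ZZ)$. So there is nothing here to compare your argument against; the right comparison is with Debarre--Kuznetsov's own proof in \cite{DK2}.

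That said, your strategy diverges from what one can infer of their approach from the lemma just quoted. Debarre--Kuznetsov work with the curve of $\sigma$-planes contained in $Y$ and the incidence correspondence it carries, relating this curve to the double EPW surface $S$ directly; the surjection of Lemma~\ref{ldk} is the engine of their argument. Your Step~2 instead passes through a Gushel--Mukai threefold partner $X$, either via an equivalence of Kuznetsov components $\mathcal{A}_Y\simeq\mathcal{A}_X$ or via families of conics, and then invokes the threefold case. This detour is plausible in outline but carries real risks you do not address: a derived equivalence of admissible subcategories furnishes a Fourier--Mukai kernel on the ambient varieties, but extracting from it an \emph{integral} isomorphism on the primitive middle cohomologies (rather than merely a rational one, or one up to Tate twist that may fail to match polarizations on the nose) is delicate and is not a formal consequence of Hochschild homology matching. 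Your geometric alternative via conics is left vague (``a variety dominating $S$'') and does not obviously produce a correspondence with the required integrality and principality properties. By contrast, the $\sigma$-plane approach gives an explicit algebraic cycle on $F^2_\sigma(Y)\times Y$ from the outset, which is exactly what is needed both for the integral statement and for the application later in this paper.

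In short: your proposal is a reasonable heuristic for why the theorem should be true, but it is neither the paper's proof (there isn't one) nor, as far as one can tell from Lemma~\ref{ldk}, the proof in \cite{DK2}; and the steps you flag as ``the genuine difficulty'' are indeed genuine and not resolved by what you have written.
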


\begin{proof} This is \cite[Theorem 1.1]{DK2}. In this note, we do not need the full force of Theorem \ref{pp}; we only need the following, which is a key ingredient in the proof of loc. cit.:

\begin{lemma}[Debarre--Kuznetsov \cite{DK2}]\label{ldk} Let $Y$ be a general Gushel--Mukai fivefold, and let $F^2_\sigma(Y)$ be the smooth curve parametrizing $\sigma$-planes contained in $Y$. Then the universal $\sigma$-plane $P\subset F^2_\sigma(Y)\times Y$ determines a surjection
  \[ P_\ast\colon\ \   H_1(F^2_\sigma(Y),\ZZ)\ \twoheadrightarrow\ H_5(Y,\ZZ)\ .\]
  \end{lemma}

(This is \cite[Lemma 5.8]{DK2}.)
\end{proof}

\subsection{MCK decomposition}
\label{ss:mck}

\begin{definition}[Murre \cite{Mur}] Let $X$ be a smooth projective variety of dimension $n$. We say that $X$ has a {\em CK decomposition\/} if there exists a decomposition of the diagonal
   \[ \Delta_X= \pi^0_X+ \pi^1_X+\cdots +\pi_X^{2n}\ \ \ \hbox{in}\ A^n(X\times X)\ ,\]
  such that the $\pi^i_X$ are mutually orthogonal idempotents and $(\pi_X^i)_\ast H^\ast(X,\QQ)= H^i(X,\QQ)$.
  
  (NB: ``CK decomposition'' is shorthand for ``Chow--K\"unneth decomposition''.)
\end{definition}

\begin{remark} The existence of a CK decomposition for any smooth projective variety is part of Murre's conjectures \cite{Mur}, \cite{J4}. 
\end{remark}

\begin{definition}[Shen--Vial \cite{SV}] Let $X$ be a smooth projective variety of dimension $n$. Let $\Delta_X^{\rm sm}\in A^{2n}(X\times X\times X)$ be the class of the small diagonal
  \[ \Delta_X^{\rm sm}:=\bigl\{ (x,x,x)\ \vert\ x\in X\bigr\}\ \subset\ X\times X\times X\ .\]
  An {\em MCK decomposition\/} is a CK decomposition $\{\pi_X^i\}$ of $X$ that is {\em multiplicative\/}, i.e. it satisfies
  \[ \pi_X^k\circ \Delta_X^{\rm sm}\circ (\pi_X^i\times \pi_X^j)=0\ \ \ \hbox{in}\ A^{2n}(X\times X\times X)\ \ \ \hbox{for\ all\ }i+j\not=k\ .\]
  
 (NB: ``MCK decomposition'' is shorthand for ``multiplicative Chow--K\"unneth decomposition''.) 
  \end{definition}
  
  \begin{remark} The small diagonal (seen as a correspondence from $X\times X$ to $X$) induces the {\em multiplication morphism\/}
    \[ \Delta_X^{\rm sm}\colon\ \  h(X)\otimes h(X)\ \to\ h(X)\ \ \ \hbox{in}\ \MM_{\rm rat}\ .\]
 Let us assume $X$ has a CK decomposition
  \[ h(X)=\bigoplus_{i=0}^{2n} h^i(X)\ \ \ \hbox{in}\ \MM_{\rm rat}\ .\]
  By definition, this decomposition is multiplicative if for any $i,j$ the composition
  \[ h^i(X)\otimes h^j(X)\ \to\ h(X)\otimes h(X)\ \xrightarrow{\Delta_X^{\rm sm}}\ h(X)\ \ \ \hbox{in}\ \MM_{\rm rat}\]
  factors through $h^{i+j}(X)$.
  
  If $X$ has an MCK decomposition, then setting
    \[ A^i_{(j)}(X):= (\pi_X^{2i-j})_\ast A^i(X) \ ,\]
    one obtains a bigraded ring structure on the Chow ring: that is, the intersection product sends $A^i_{(j)}(X)\otimes A^{i^\prime}_{(j^\prime)}(X) $ to  $A^{i+i^\prime}_{(j+j^\prime)}(X)$.
    
   It is expected that for any $X$ with an MCK decomposition, one has
    \[ A^i_{(j)}(X)\stackrel{??}{=}0\ \ \ \hbox{for}\ j<0\ ,\ \ \ A^i_{(0)}(X)\cap A^i_{\rm hom}(X)\stackrel{??}{=}0\ ;\]
    this is related to Murre's conjectures B and D, that have been formulated for any CK decomposition \cite{Mur}.

  The property of having an MCK decomposition is restrictive, and is closely related to Beauville's ``splitting property conjecture'' \cite{Beau3}. 
  To give an idea: hyperelliptic curves have an MCK decomposition \cite[Example 8.16]{SV}, but the very general curve of genus $\ge 3$ does not have an MCK decomposition \cite[Example 2.3]{FLV2}. As for surfaces: a smooth quartic in $\PP^3$ has an MCK decomposition, but a very general surface of degree $ \ge 7$ in $\PP^3$ should not have an MCK decomposition \cite[Proposition 3.4]{FLV2}.
For more detailed discussion, and examples of varieties with an MCK decomposition, we refer to \cite[Section 8]{SV}, as well as \cite{V6}, \cite{SV2}, \cite{FTV}, \cite{37}, \cite{38}, \cite{39}, \cite{40}, \cite{FLV2}.
   \end{remark}

\subsection{Trivial Chow groups} We recall a folklore result:

\begin{lemma}\label{triv} Let $M$ be a smooth projective variety. The following are equivalent:

\noindent
(\rom1) The motive of $M$ is of Tate type: 
    \[ h(M)\cong \bigoplus \one(\ast)\ \ \ \hbox{in}\ \MM_{\rm rat}\ ;\]

\noindent
(\rom2) The cycle class map induces an isomorphism $A^\ast(M)\cong H^\ast(M,\QQ)$\,;

\noindent
(\rom3) $A^\ast_{\rm hom}(M)=0$\,;

\noindent
(\rom4) $A^\ast(M)$ is a finite-dimensional $\QQ$-vector space;

\noindent
(\rom5) The natural map $A^\ast(M)\otimes A^\ast(M)\to A^\ast(M\times M)$ is an isomorphism.
\end{lemma}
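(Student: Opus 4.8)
\emph{Strategy.} All the substance of the lemma is in the single implication $(iv)\Rightarrow(i)$: the implications $(i)\Rightarrow(ii)\Rightarrow(iii)\Rightarrow(iv)$ and $(i)\Leftrightarrow(v)$ are formal, so once $(iv)\Rightarrow(i)$ is in hand the cycle $(i)\Rightarrow(ii)\Rightarrow(iii)\Rightarrow(iv)\Rightarrow(i)$ closes and $(v)$ joins in. For $(i)\Rightarrow(ii)$ I would apply the additive functors $A^\ast(-)$ and $H^\ast(-,\QQ)$ on $\MM_{\rm rat}$ --- which are compatible with the cycle class map, and which send $\one(-j)$ to $\QQ$ in codimension $j$, resp. to $\QQ$ in degree $2j$, with the cycle class map an isomorphism --- to a fixed isomorphism $h(M)\cong\bigoplus_j\one(-j)^{\oplus b_j}$; this gives $(ii)$, and incidentally $(iv)$. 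Next, $(ii)\Rightarrow(iii)$ is immediate, since $A^\ast_{\rm hom}(M)$ is by definition the kernel of the cycle class map, and $(iii)\Rightarrow(iv)$ holds because under $(iii)$ the ring $A^\ast(M)$ embeds into the finite-dimensional space $H^\ast(M,\QQ)$.

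\emph{The equivalence $(i)\Leftrightarrow(v)$.} For $(i)\Rightarrow(v)$: Tate-type motives are stable under tensor product, so $h(M\times M)\cong h(M)\otimes h(M)$ is again of Tate type; applying $(ii)$ to $M$ and to $M\times M$, and using the K\"unneth formula, identifies the natural product map $A^\ast(M)\otimes A^\ast(M)\to A^\ast(M\times M)$ with the isomorphism $H^\ast(M,\QQ)\otimes H^\ast(M,\QQ)\xrightarrow{\sim}H^\ast(M\times M,\QQ)$. For $(v)\Rightarrow(i)$: surjectivity in $(v)$ lets us write $\Delta_M=\sum_k a_k\times b_k$ as a \emph{finite} sum of external products of homogeneous cycles; each self-correspondence $a_k\times b_k$ of $M$ factors, as a morphism $h(M)\to h(M)$, through a single Tate motive $\one(\ast)$, so $\ide_{h(M)}=\Delta_M$ factors through a finite direct sum of Tate motives. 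Hence $h(M)$ is a direct summand of a finite sum $\bigoplus_j\one(-j)^{\oplus n_j}$; since the endomorphism algebra of the latter is the semisimple ring $\prod_j\mathrm{Mat}_{n_j}(\QQ)$, any direct summand of it is again a finite sum of Tate motives, which is $(i)$.

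\emph{The implication $(iv)\Rightarrow(i)$.} This is the heart of the matter, and I would obtain it from the Bloch--Srinivas ``decomposition of the diagonal'' technique, in the refined form due to Bloch--Srinivas, Jannsen and Kimura (cf. \cite{Kim}, \cite{J4}), by induction on $n=\dim M$. First, a finite-dimensional $A_0(M)$ forces $A_0(M)_{\rm hom}=0$: the Albanese surjection makes $\operatorname{Alb}(M)(\C)\otimes\QQ$ finite-dimensional over $\QQ$, so (as $\dim_\QQ\C=\infty$) $\operatorname{Alb}(M)=0$, and then $A_0(M)_{\rm hom}$ is an Albanese kernel that is finite-dimensional, hence weakly representable, hence zero by a classical theorem. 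With $A_0(M)\cong\QQ$, the Bloch--Srinivas decomposition reads $\Delta_M=\Gamma_1+M\times\{o\}$ in $A^n(M\times M)$ (for a point $o\in M$), with $\Gamma_1$ supported on $D\times M$ for some divisor $D\subsetneq M$; passing to a resolution $\widetilde D\to D\subset M$, this presents $h(M)$ as the direct sum of $\one(-n)$ and a direct summand $N$ of a Tate twist of $h(\widetilde D)$. Since $A^\ast(N)$ is a direct summand of $A^\ast(M)$, it remains finite-dimensional, so the inductive hypothesis --- phrased for motivic summands of this kind, which are ``carried by'' the lower-dimensional $\widetilde D$ --- shows $N$, and hence $h(M)$, is of Tate type. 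This closes the cycle, so all five conditions are equivalent. The one genuine obstacle I anticipate is exactly this last step: keeping the finite-dimensionality hypothesis under control along the Bloch--Srinivas induction as one descends to the auxiliary varieties $\widetilde D$; the rest is classical or formal.
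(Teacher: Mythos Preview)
Your argument is correct and structurally close to the paper's. Two points of difference are worth noting. For the link with $(v)$: the paper shows $(v)\Rightarrow(iv)$ by letting the decomposition $\Delta_M=\sum_j\alpha_j\times\beta_j$ act on $A^\ast(M)$, so that the identity on Chow groups factors through a finite-dimensional space; you instead go straight to $(v)\Rightarrow(i)$ by factoring $\ide_{h(M)}$ through $\bigoplus\one(\ast)$ and invoking semisimplicity of its endomorphism algebra. Both are valid; the paper's route is a line shorter, yours a touch more conceptual and it reaches $(i)$ without the detour through $(iv)$. For the hard implication $(iv)\Rightarrow(i)$: the paper does not argue it at all but simply cites \cite{Kim2} and \cite{V10}, whereas you sketch a Bloch--Srinivas induction and correctly flag its genuine obstacle (finite-dimensionality of $A^\ast$ need not pass to the resolved divisor $\widetilde D$). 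That obstacle is real and is exactly what those references address by working motivically rather than by naive induction on the ambient variety; your intermediate step ``finite-dimensional Albanese kernel $\Rightarrow$ weakly representable $\Rightarrow$ zero'' is likewise a bit brisk, but since you already identify $(iv)\Rightarrow(i)$ as the crux this is in the same spirit as the paper's deferral to the literature.
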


\begin{proof} The implications (\rom1)$\Rightarrow$(\rom2)$\Rightarrow$(\rom3)$\Rightarrow$(\rom4) are obvious. The implication (\rom4)$\Rightarrow$(\rom1) is \cite{Kim2} or \cite{V10}. The implication (\rom1)$\Rightarrow$(\rom5) follows readily from the fact that $h(M\times M)=h(M)\otimes h(M)$ and $\one(\ell)\otimes\one(m)=\one(\ell+m)$. Finally, to see that (\rom5)$\Rightarrow$(\rom4), one notes that (\rom5) implies the decomposition of the diagonal
  \[ \Delta_M=\sum_{j=1}^r \alpha_j\times \beta_j\ \ \ \hbox{in}\ A^{\dim M}(M\times M)\ ,\]
  where $\alpha_j,\beta_j\in A^\ast(M)$.
 Letting this decomposition act on $A^\ast(M)$, one finds that the identity factors over an $r$-dimensional $\QQ$-vector space, and so (\rom4) holds.
 \end{proof}
 
 \begin{definition}[Voisin {\cite[Section 3.1]{V0}}] A smooth projective variety $M$ is said to have {\em trivial Chow groups\/} if $M$ verifies any of the equivalent conditions of Lemma \ref{triv}.
 \end{definition}

 \subsection{The Franchetta property}
 \label{ss:fr}

 \begin{definition} Let $\YY\to B$ be a smooth projective morphism, where $\YY, B$ are smooth quasi-projective varieties. We say that $\YY\to B$ has the {\em Franchetta property in codimension $j$\/} if the following holds: for every $\Gamma\in A^j(\YY)$ such that the restriction $\Gamma\vert_{Y_b}$ is homologically trivial for the very general $b\in B$, the restriction $\Gamma\vert_b$ is zero in $A^j(Y_b)$ for all $b\in B$.
 
 We say that $\YY\to B$ has the {\em Franchetta property\/} if $\YY\to B$ has the Franchetta property in codimension $j$ for all $j$.
 \end{definition}
 
 This property is studied in \cite{PSY}, \cite{BL}, \cite{FLV}, \cite{FLV3}.
 
 \begin{definition} Given a family $\YY\to B$ as above, with $Y:=Y_b$ a fiber, we write
   \[ GDA^j_B(Y):=\ima\Bigl( A^j(\YY)\to A^j(Y)\Bigr) \]
   for the subgroup of {\em generically defined cycles}. 
  In a context where it is clear to which family we are referring, the index $B$ will often be suppressed from the notation.
  \end{definition}
  
  With this notation, the Franchetta property amounts to saying that $GDA^\ast_B(Y)$ injects into cohomology, under the cycle class map.

   \subsection{A Franchetta-type result}

  \begin{proposition}\label{Frtype} Let $M$ be a smooth projective variety with trivial Chow groups. Let $L_1,\ldots,L_r\to {M}$ be very ample line bundles, and let
  $\YY\to B$ be the universal family of smooth dimensionally transverse complete intersections of type 
    \[ Y={M}\cap H_1\cap\cdots\cap H_r\ ,\ \ \  H_j\in\vert L_j\vert\ .\]
  Assume the fibers $Y=Y_b$ have $H^{\dim Y}_{\rm tr}(Y,\QQ)\not=0$.
  There is an inclusion
    \[ \ker \Bigl( GDA^{\dim Y}_B(Y\times Y)\to H^{2\dim Y}(Y\times Y,\QQ)\Bigr)\ \ \subset\ \Bigl\langle (p_1)^\ast GDA^\ast_B(Y), (p_2)^\ast GDA^\ast_B(Y)  \Bigr\rangle\ .\]
   \end{proposition}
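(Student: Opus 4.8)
The plan is to spread $\Gamma$ out over the parameter space, stratify the relative square of the universal family along the diagonal of $M$, and combine a localization sequence with the projective bundle formula. First I would enlarge $B$ to the full product of linear systems $\bar B:=\prod_j|L_j|$ and introduce the incidence variety $\bar{\YY}:=\{(x,(H_j)) : x\in H_j\ \forall j\}\subset M\times\bar B$ together with its relative square $W:=\bar{\YY}\times_{\bar B}\bar{\YY}\subset M\times M\times\bar B$. Since $B\subset\bar B$ is open and restriction of Chow groups to an open subset is surjective, $GDA^\ast_B(Y)$ and $GDA^\ast_B(Y\times Y)$ are exactly the images of $A^\ast(\bar{\YY})$, resp. $A^\ast(W)$, under the (refined Gysin) restriction to the fibre $Y=Y_b$, resp. $Y\times Y=W_b$, over a chosen $b\in B$.

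The structural input is that $\bar{\YY}\to M$ is an iterated projective bundle: its fibre over $x$ is $\prod_j\{H\in|L_j| : x\in H\}$, a product of projective spaces. By the projective bundle formula and the triviality of $A^\ast(M)$, $A^\ast(\bar{\YY})$ is a free $A^\ast(M)$-module on monomials in the relative hyperplane classes $\xi_1,\dots,\xi_r$; in particular $A^\ast(\bar{\YY})$ injects into $H^\ast(\bar{\YY},\QQ)$, which already yields the Franchetta property for $\YY\to B$. Over the open set $U:=(M\times M)\setminus\Delta_M$ the projection $W\to M\times M$ restricts to an iterated projective bundle $W_U\to U$ — this is precisely where very ampleness of the $L_j$ is used, so that a section of $L_j$ vanishing at $x$ need not vanish at a distinct point $y$, and the relevant kernel bundles have constant rank over $U$. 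One checks moreover that the relative hyperplane classes of $W_U\to U$ are the restrictions of the global classes $\mathrm{pr}_1^\ast\xi_j,\mathrm{pr}_2^\ast\xi_j$, where $\mathrm{pr}_1,\mathrm{pr}_2\colon W\to\bar{\YY}$ are the two projections. Combining the projective bundle formula over $U$ with $A^\ast(M\times M)\cong A^\ast(M)\otimes A^\ast(M)$ (Lemma~\ref{triv}) and the surjection $A^\ast(M\times M)\twoheadrightarrow A^\ast(U)$ from localization along $\Delta_M$, one concludes that $A^\ast(W_U)$ is generated, as a $\QQ$-algebra, by the restrictions of $\mathrm{pr}_1^\ast A^\ast(\bar{\YY})$ and $\mathrm{pr}_2^\ast A^\ast(\bar{\YY})$.

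Now let $Z\subset W$ be the preimage of $\Delta_M$. A dimension count shows $Z\cong\bar{\YY}$ is irreducible of codimension exactly $\dim Y=:d$ in $W$, so localization in codimension $d$ gives the exact sequence
\[ A^0(Z)\ \xrightarrow{\ \iota_\ast\ }\ A^d(W)\ \xrightarrow{\ j^\ast\ }\ A^d(W_U)\ \longrightarrow\ 0. \]
Given $\Gamma\in\ker\bigl(GDA^d_B(Y\times Y)\to H^{2d}(Y\times Y,\QQ)\bigr)$, lift it to $\tilde\Gamma\in A^d(W)$. By the previous paragraph $j^\ast\tilde\Gamma$ is the restriction of a global polynomial $\tilde\Gamma'\in A^d(W)$ in the classes $\mathrm{pr}_1^\ast A^\ast(\bar{\YY})$ and $\mathrm{pr}_2^\ast A^\ast(\bar{\YY})$, so $\tilde\Gamma=\tilde\Gamma'+\lambda\,[Z]$ for some $\lambda\in\QQ$. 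Restricting to $Y\times Y=W_b$: since $\mathrm{pr}_i$ restricts on the fibre to the projection $p_i$ and classes on $\bar{\YY}$ restrict to generically defined cycles on $Y$, the class $\tilde\Gamma'|_{Y\times Y}$ lies in $\langle (p_1)^\ast GDA^\ast_B(Y),(p_2)^\ast GDA^\ast_B(Y)\rangle$, while $[Z]|_{Y\times Y}$ is supported on $Z_b=\Delta_Y$ (of the expected dimension), hence is a positive multiple of $[\Delta_Y]$. So $\Gamma=\Gamma_0+\mu\,[\Delta_Y]$ with $\Gamma_0$ in the desired subalgebra and $\mu\in\QQ$. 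To see $\mu=0$, apply the cycle class map and project onto the Künneth summand $H^d_{\rm tr}(Y,\QQ)\otimes H^d_{\rm tr}(Y,\QQ)$ of $H^{2d}(Y\times Y,\QQ)$: writing $N\subset H^d(Y,\QQ)$ for the span of classes of algebraic cycles (so $H^d(Y)=N\oplus H^d_{\rm tr}(Y)$ orthogonally), the $(d,d)$-Künneth component of $[\Gamma_0]$ lies in $N\otimes N$ because $\ima\bigl(GDA^\ast_B(Y)\to H^\ast(Y,\QQ)\bigr)$ consists of algebraic classes, hence projects to $0$; whereas the $d$-th Künneth component of $[\Delta_Y]$ restricts on $H^d_{\rm tr}(Y)$ to the class of the identity endomorphism, which is non-zero since $H^d_{\rm tr}(Y,\QQ)\neq0$. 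As $[\Gamma]=0$, this forces $\mu=0$, so $\Gamma=\Gamma_0\in\langle (p_1)^\ast GDA^\ast_B(Y),(p_2)^\ast GDA^\ast_B(Y)\rangle$, as wanted.

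The step I expect to require the most care is the middle one: identifying $W_U\to U$ as an iterated projective bundle (which is exactly where the very ampleness hypothesis is indispensable, and where one must keep track of the relative hyperplane classes being globally defined) and verifying that the diagonal stratum $Z$ has codimension precisely $\dim Y$ in $W$, so that the kernel of $j^\ast$ in codimension $d$ shrinks to $\QQ\cdot[Z]$. Once this bookkeeping is settled, the hypothesis $H^{\dim Y}_{\rm tr}(Y)\neq0$ is exactly what is needed to annihilate the leftover diagonal term and conclude.
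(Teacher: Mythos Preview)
Your proof is correct and follows essentially the same strategy as the paper: stratify the projection $\bar{\YY}\times_{\bar B}\bar{\YY}\to M\times M$ along the diagonal, use the projective-bundle structure off $\Delta_M$ together with $A^\ast(M\times M)\cong A^\ast(M)\otimes A^\ast(M)$ to see that $GDA^{\dim Y}(Y\times Y)$ is spanned by decomposable classes and $[\Delta_Y]$, and then kill the diagonal coefficient using $H^{\dim Y}_{\rm tr}(Y)\neq 0$. The only differences are cosmetic: the paper takes $\bar B=\PP H^0(M,L_1\oplus\cdots\oplus L_r)$ rather than your product $\prod_j|L_j|$, and it packages your localization/projective-bundle computation into a single citation of the ``stratified projective bundle'' result \cite[Proposition 5.2]{FLV}, whereas you unwind that argument by hand (note incidentally that on $W$ your classes $\mathrm{pr}_1^\ast\xi_j$ and $\mathrm{pr}_2^\ast\xi_j$ coincide, since both are pulled back from $\bar B$, so only $r$ relative hyperplane classes are in play, not $2r$---but this does not affect the argument).
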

   
   \begin{proof} This is essentially Voisin's ``spread'' result \cite[Proposition 1.6]{V1} (cf. also \cite[Proposition 5.1]{LNP} for a reformulation of Voisin's result). We give a proof which is somewhat different from \cite{V1}. Let $\bar{B}:=\PP H^0({M},L_1\oplus\cdots\oplus L_r)$ (so $B\subset \bar{B}$ is a Zariski open), and let us consider the projection
   \[ \pi\colon\ \  \YY\times_{\bar{B}} \YY\ \to\ M\times M\ .\]
   Using the very ampleness assumption, one finds that $\pi$ is a $\PP^s$-bundle over $({M}\times {M})\setminus \Delta_{{M}}$, and a $\PP^t$-bundle over $\Delta_{{M}}$.
   That is, $\pi$ is what is termed a {\em stratified projective bundle\/} in \cite{FLV}. As such, \cite[Proposition 5.2]{FLV} implies the equality
      \begin{equation}\label{stra} GDA^\ast_B(Y\times Y)= \ima\Bigl( A^\ast(M\times M)\to A^\ast(Y\times Y)\Bigr) +  \Delta_\ast GDA^\ast_B(Y)\ ,\end{equation}
      where $\Delta\colon Y\to Y\times Y$ is the inclusion along the diagonal. As $M$ has trivial Chow groups, $A^\ast(M\times M)$ is generated by $A^\ast(M)\otimes A^\ast(M)$ (Lemma \ref{triv}). 
      Base-point freeness of the $L_j$ implies that 
        \[  GDA^\ast_B(Y)=\ima\bigl( A^\ast(M)\to A^\ast(Y)\bigr)\ .\]
       The equality \eqref{stra} thus reduces to
      \[ GDA^\ast_B(Y\times Y)=\Bigl\langle (p_1)^\ast GDA^\ast_B(Y), (p_2)^\ast GDA^\ast_B(Y), \Delta_Y\Bigr\rangle\ \]   
      (where $p_1, p_2$ denote the projection from $S\times S$ to first resp. second factor). The assumption that $Y$ has non-zero transcendental cohomology
      implies that the class of $\Delta_Y$ is not decomposable in cohomology. It follows that
      \[ \begin{split}  \ima \Bigl( GDA^{\dim Y}_B(Y\times Y)\to H^{2\dim Y}(Y\times Y,\QQ)\Bigr) =&\\
       \ima\Bigl(  \Dec^{\dim Y}(Y\times Y)\to H^{2\dim Y}(Y\times Y,\QQ)\Bigr)& \oplus \QQ[\Delta_Y]\ ,\\
       \end{split}\]
      where we use the shorthand 
       \[ \Dec^j(Y\times Y):= \Bigl\langle (p_1)^\ast GDA^\ast_B(Y), (p_2)^\ast GDA^\ast_B(Y)\Bigr\rangle\cap A^j(Y\times Y) \ \]     
       for the {\em decomposable cycles\/}. 
       We now see that if $\Gamma\in GDA^{\dim Y}(Y\times Y)$ is homologically trivial, then $\Gamma$ does not involve the diagonal and so $\Gamma\in \Dec^{\dim Y}(Y\times Y)$.
       This proves the proposition.
         \end{proof}
  
  \begin{remark} Proposition \ref{Frtype} has the following consequence: if the family $\YY\to B$ has the Franchetta property, then $\YY\times_B \YY\to B$ has the Franchetta property in codimension $\dim Y$.
   \end{remark}

 \subsection{A CK decomposition}

\begin{lemma}\label{ck} Let $M$ be a smooth projective variety with trivial Chow groups. Let $Y\subset M$ be a smooth complete intersection of dimension $\dim Y=d$ defined by ample line bundles.
 The variety $Y$ has a self-dual CK decomposition $\{\pi^j_Y\}$ with the property that
   \[  h^j(Y):=(Y,\pi^j_Y,0) =\oplus \one(\ast)\ \ \ \hbox{in}\ \MM_{\rm rat}\ \ \ \forall\ j\not=d \ .\]
   
   Moreover, this CK decomposition is {\em generically defined\/}: writing $\YY\to B$ for the universal family (of complete intersections of the type of $Y$),
 there exist relative projectors $\pi^j_\YY\in A^{d}(\YY\times_B \YY)$ such that $\pi^j_Y=\pi^j_\YY\vert_{b}$ (where $Y=Y_b$ for $b\in B$). 
     \end{lemma}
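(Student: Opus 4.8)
The plan is to write the projectors down explicitly as sums of products of algebraic cycles, exploiting that the cohomology of $Y$ is of Tate type away from the middle degree $d=\dim Y$; the only real work is to make sure that everything in sight is generically defined. Write $\iota\colon Y\hookrightarrow M$ for the inclusion. The first step is to pin down $H^\ast(Y,\QQ)$ outside degree $d$. Since $M$ has trivial Chow groups, $H^\ast(M,\QQ)$ is of Tate type, concentrated in even degrees, and each $H^{2i}(M,\QQ)$ is generated by classes of algebraic cycles (Lemma \ref{triv}). The Lefschetz hyperplane theorem then gives $H^{2i}(Y,\QQ)=\iota^\ast H^{2i}(M,\QQ)$ for $2i<d$, so (arguing as in the proof of Proposition \ref{Frtype}, using base-point freeness of the defining bundles) the map $GDA^i(Y)\to H^{2i}(Y,\QQ)$ is surjective for $2i<d$. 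For the degrees above the middle I would invoke the Hard Lefschetz theorem: if $h\in A^1(Y)$ is the restriction of an ample divisor class on $M$ — which is generically defined — then $\cup h^{2i-d}\colon H^{2(d-i)}(Y,\QQ)\xrightarrow{\ \sim\ }H^{2i}(Y,\QQ)$ for $d<2i\le 2d$, whence $H^{2i}(Y,\QQ)=h^{2i-d}\cdot\iota^\ast H^{2(d-i)}(M,\QQ)$ is again spanned by (products of) generically defined cycles, and $H^k(Y,\QQ)=0$ for odd $k\ne d$ (Lefschetz below $d$, Hard Lefschetz above). Thus $GDA^i(Y)\to H^{2i}(Y,\QQ)$ is onto for all $i$ with $2i\ne d$.

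The second step is the construction. For each even $j<d$ pick $v_{j,1},\dots,v_{j,n_j}\in GDA^{j/2}(Y)$ whose classes form a basis of $H^j(Y,\QQ)$, and $w_{j,1},\dots,w_{j,n_j}\in GDA^{d-j/2}(Y)$ whose classes form the Poincaré-dual basis of $H^{2d-j}(Y,\QQ)$ (this uses surjectivity of $GDA^{d-j/2}(Y)\to H^{2d-j}(Y,\QQ)$, valid since $2d-j>d$), normalised so that $\deg_Y(v_{j,\alpha}\cdot w_{j,\beta})=\delta_{\alpha\beta}$. Put
\[ \pi^j_Y:=\sum_{\alpha}w_{j,\alpha}\times v_{j,\alpha}\in A^d(Y\times Y)\quad(j<d\ \text{even}),\qquad \pi^j_Y:=0\quad(j<d\ \text{odd}),\]
$\pi^j_Y:={}^{t}\pi^{2d-j}_Y$ for $j>d$, and $\pi^d_Y:=\Delta_Y-\sum_{j\ne d}\pi^j_Y$. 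Using the composition rule $(a\times b)\circ(a'\times b')=\deg_Y(a\cdot b')\,(a'\times b)$ together with the duality relation, one checks that the $\pi^j_Y$ with $j\ne d$ are mutually orthogonal idempotents; hence $\pi^d_Y$ is an idempotent orthogonal to all of them. On cohomology $(\pi^j_Y)_\ast$ is the projector onto $H^j(Y,\QQ)$ for $j\ne d$ (immediate from the explicit form and the choice of dual bases), so $(\pi^d_Y)_\ast$ is the projector onto $H^d(Y,\QQ)$: thus $\{\pi^j_Y\}$ is a CK decomposition, self-dual because ${}^{t}\pi^j_Y=\pi^{2d-j}_Y$ for all $j$. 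Finally each summand $w_{j,\alpha}\times v_{j,\alpha}$ is itself an idempotent (since $\deg_Y(w_{j,\alpha}\cdot v_{j,\alpha})=1$) and the cycles $w_{j,\alpha}$, $v_{j,\alpha}$ exhibit an isomorphism $(Y,w_{j,\alpha}\times v_{j,\alpha},0)\cong\one(-j/2)$; hence $h^j(Y)\cong\bigoplus_\alpha\one(-j/2)$ for $j<d$, and dually for $j>d$, so $h^j(Y)=\bigoplus\one(\ast)$ whenever $j\ne d$.

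For the ``moreover'' statement, every ingredient — the $v_{j,\alpha}$, $w_{j,\alpha}$, the class $h$, and $\Delta_Y$ — is by construction the restriction to $Y=Y_b$ of a class on the total space $\YY$ (resp. of $\Delta_{\YY/B}$). Defining $\pi^j_\YY\in A^d(\YY\times_B\YY)$ by the same formulas with $\times$ replaced by $\times_B$, one gets $\pi^j_\YY|_b=\pi^j_{Y_b}$ for every $b\in B$. It remains to see that $\{\pi^j_{Y_b}\}$ is a CK decomposition for \emph{every} $b$, not merely the very general one; but the verification above rests only on the numerical data $\deg_{Y_b}(v_{j,\alpha}|_b\cdot w_{j,\beta}|_b)=\delta_{\alpha\beta}$ and $\dim H^j(Y_b,\QQ)=n_j$, both locally constant in $b$ (deformation invariance of intersection numbers; constancy of Betti numbers in a smooth family), hence holding for all $b$ once they hold for the very general $b$. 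In particular the classes $[v_{j,\alpha}|_b]$ remain linearly independent for every $b$, their Gram matrix against the $w$'s being the identity.

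I expect the only genuinely delicate point to be the treatment of the degrees \emph{above} the middle: there the Lefschetz hyperplane theorem says nothing, and one must combine Hard Lefschetz with the algebraicity of the Lefschetz operator $\cup h$ to see that $H^{2i}(Y,\QQ)$ is still generated by generically defined cycles. This is what allows the projectors $\pi^j_Y$ for $j>d$ — and therefore $\pi^d_Y=\Delta_Y-\sum_{j\ne d}\pi^j_Y$ — to be written down while keeping the whole decomposition generically defined. The remaining steps (the correspondence bookkeeping and the identification of the $h^j(Y)$, $j\ne d$, as sums of Lefschetz motives) are entirely routine.
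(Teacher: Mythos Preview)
Your proof is correct and spells out precisely the standard construction that the paper merely cites. The paper's own proof consists of a single sentence referring to \cite{Pet} (noting that for odd $d$ the ``variable motive'' there is $h^d(Y)$); your argument is exactly the explicit version of that construction: Lefschetz hyperplane below the middle, Hard Lefschetz with an algebraic operator above the middle, decomposable projectors $\sum w_\alpha\times v_\alpha$ built from dual bases, and $\pi^d_Y$ as the complement. The verification that everything is generically defined (cycles pulled back from $M$ along $\YY\to M$, the relative diagonal, and constancy of intersection numbers in the family) is the only point where care is needed, and you handle it correctly.
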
 
 
 \begin{proof} This is a standard construction, one can look for instance at \cite{Pet} (in case $d$ is odd, which will be the case in this note,  the ``variable motive'' $h(Y)^{\rm var}$ of 
 \cite[Theorem 4.4]{Pet}  coincides with $h^{d}(Y)$).
\end{proof}

\section{Main results}

\subsection{Vanishing of $A_1^{\rm hom}$}

\begin{proposition}\label{van} Let $Y$ be a Gushel--Mukai fivefold. Then
  \[ A^i_{\rm hom}(Y)=0\ \ \ \forall\ i\not= 3\ .\]
  In particular, $Y$ has finite-dimensional motive (in the sense of \cite{Kim}, \cite{An}).
\end{proposition}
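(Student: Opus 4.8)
The plan is to prove a stronger, purely motivic statement: for every Gushel--Mukai fivefold $Y$ there is an isomorphism in $\MM_{\rm rat}$
\[ h(Y)\ \cong\ \one\oplus\one(-1)\oplus\one(-2)^{\oplus 2}\oplus h^1(B)(-2)\oplus\one(-3)^{\oplus 2}\oplus\one(-4)\oplus\one(-5)\ , \]
for some abelian variety $B$ (which one checks is isogenous to $\JJ^3(Y)$). Granting this, Proposition \ref{van} is formal. The Tate summands contribute no homologically trivial classes to $A^\ast(Y)$, while $\operatorname{Hom}_{\MM_{\rm rat}}(\one,h^1(B)(-2)(i))$ realizes into $\operatorname{Hom}_{\mathrm{HS}}(\QQ,H^5(Y)(i))=0$ (by Hodge type) and hence consists entirely of homologically trivial cycles; so $A^i_{\rm hom}(Y)=\operatorname{Hom}_{\MM_{\rm rat}}(\one,h^1(B)(i-2))$. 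This group is zero for $i\le 2$ (it sits inside $A^{i-2}(B)$ and meets no $h^1$-part there), it is zero for $i\ge 4$ (dualizing, it equals $\operatorname{Hom}_{\MM_{\rm rat}}(h^1(\widehat B),\one(i-3))$, a summand of $A^{\dim B+i-3}(\widehat B)=0$), and for $i=3$ it equals $\Pic^0(B)_\QQ$, recovering $A^3_{\rm hom}(Y)\cong\JJ^3(Y)_\QQ$. Finite-dimensionality of $h(Y)$ is then immediate, all summands being finite-dimensional.

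For the Tate summands $h^j(Y)=\oplus\one(\ast)$ with $j\ne 5$: if $Y$ is ordinary it is a smooth ample hypersurface section of $M=\Gr(2,V_5)$, which has trivial Chow groups, so Lemma \ref{ck} gives a self-dual CK decomposition with $h^j(Y)=\oplus\one(\ast)$ for $j\ne 5$, the Tate twists being pinned down by the Hodge diamond of Proposition \ref{dm}. If $Y$ is special it is a double cover $Y\to X$ of a smooth $5$-dimensional linear section $X$ of $\Gr(2,V_5)$; such $X$ has trivial Chow groups (one starts from the threefold linear section, which has $H^3=0$, and propagates by weak Lefschetz), so $h(Y)=h(X)\oplus h(Y)^-$ with $h(X)$ a sum of Tate motives and $h(Y)^-$ supported cohomologically in degree $5$, which we take as $h^5(Y)$.

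The heart is the identification $h^5(Y)\cong h^1(B)(-2)$, and here Lemma \ref{ldk} is the essential input. For a general $Y$, the universal $\sigma$-plane $P\subset F^2_\sigma(Y)\times Y$ lies in $A^3(F^2_\sigma(Y)\times Y)$ and hence, after composition with the projectors $\pi^1_{F^2_\sigma(Y)}$ and $\pi^5_Y$, defines a morphism $h^1(F^2_\sigma(Y))\to h^5(Y)(2)$ in $\MM_{\rm rat}$ whose Betti realization is, up to Poincar\'e duality, the surjection $H_1(F^2_\sigma(Y),\QQ)\twoheadrightarrow H_5(Y,\QQ)$ of Lemma \ref{ldk}. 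Now $h^1(F^2_\sigma(Y))=h^1(\operatorname{Jac}F^2_\sigma(Y))$ is finite-dimensional with semisimple endomorphism algebra; using the standard machinery of finite-dimensional motives (idempotents lift along the nilpotent kernel of $\operatorname{End}_{\MM_{\rm rat}}\to\operatorname{End}_{\MM_{\rm num}}$, and a cohomological isomorphism between finite-dimensional motives is an isomorphism --- in particular there are no phantom summands) one upgrades this surjection on realizations to the assertion that $h^5(Y)(2)$ is a direct summand of $h^1(F^2_\sigma(Y))$ in $\MM_{\rm rat}$. By semisimplicity of the endomorphism algebra, such a summand is itself of the form $h^1(B)$ with $B$ an abelian variety, whence $h^5(Y)\cong h^1(B)(-2)$.

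To reach all Gushel--Mukai fivefolds, and not merely the general ones, I would spread out over the (irreducible) moduli of GM fivefolds: finite-dimensionality propagates through a connected smooth projective family by Kimura's nilpotence theorem, and the correspondences realizing $h^5\cong h^1(B)(-2)$ over the general point spread to a relative cycle whose restriction to each smooth fibre is still an isomorphism on $H^5$ (a Hodge-theoretic statement valid for all smooth $Y$), hence --- by finite-dimensionality of $h^5$ of that fibre --- an isomorphism of Chow motives. The main obstacle, and the only non-formal point, is precisely this passage from the cohomological surjection of Lemma \ref{ldk} to a motivic direct-summand statement; once one is willing to use the theory of finite-dimensional motives, everything else reduces to the already available Lemmas \ref{ck} and \ref{ldk} and Proposition \ref{dm}.
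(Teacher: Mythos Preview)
Your argument contains a genuine circularity at its heart. From Lemma \ref{ldk} you correctly obtain a correspondence $P$ inducing a surjection $H^1(F^2_\sigma(Y),\QQ)\twoheadrightarrow H^5(Y,\QQ)$, and you note that $h^1(F^2_\sigma(Y))$ is finite-dimensional. But the step ``one upgrades this surjection on realizations to the assertion that $h^5(Y)(2)$ is a direct summand of $h^1(F^2_\sigma(Y))$ in $\MM_{\rm rat}$'' does not follow from the finite-dimensionality of the \emph{source} alone. Lifting idempotents along the nilpotent kernel of $\operatorname{End}_{\MM_{\rm rat}}(h^1(F))\to\operatorname{End}_{\MM_{\rm num}}(h^1(F))$ produces a summand $N\subset h^1(F)$ and a morphism $P\vert_N\colon N\to h^5(Y)(2)$ that is an isomorphism on realizations; to conclude it is an isomorphism in $\MM_{\rm rat}$ (equivalently, to rule out a phantom complement in $h^5(Y)(2)$) one needs $h^5(Y)$ itself to be finite-dimensional, which is exactly the desired conclusion. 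Said differently: a right inverse $Q$ with $(P\circ Q)_\ast=\ide$ on $H^5(Y,\QQ)$ makes $P\circ Q$ an endomorphism of $h^5(Y)(2)$ equal to the identity numerically, but without finite-dimensionality of $h^5(Y)$ there is no nilpotence theorem available to promote this to invertibility in $\MM_{\rm rat}$.

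The paper closes this gap by a different, geometric mechanism: the Franchetta-type result Proposition \ref{Frtype}. One arranges (via Voisin's Hilbert schemes argument) for $Q$ to be generically defined, so that $(\Delta_Y-P\circ Q)\circ\pi^5_Y$ lies in $GDA^5_B(Y\times Y)$ and is homologically trivial; Proposition \ref{Frtype} then forces it to be \emph{decomposable} in $A^5(Y\times Y)$, whence $A^\ast_{\rm AJ}(Y)=0$ directly. Finite-dimensionality is deduced only afterwards, from \cite[Theorem 4]{43}. Your spreading step inherits the same problem: the assertion that ``finite-dimensionality propagates through a connected smooth projective family by Kimura's nilpotence theorem'' is not a known theorem, and your specialization argument again invokes finite-dimensionality of $h^5$ of the special fiber to pass from a cohomological isomorphism to a motivic one. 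The paper instead spreads a concrete \emph{rational equivalence of cycles} via \cite[Lemma 3.2]{Vo}, which is unproblematic and yields the split injection $h(Y)\hookrightarrow h(F)(-2)\oplus\bigoplus\one(\ast)$ for every Gushel--Mukai fivefold.
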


\begin{proof} It is a basic fact that any smooth projective variety $M$ with $A_0^{\rm hom}(M)=A_1^{\rm hom}(M)=0$ also has $A^2_{\rm AJ}(M)=A^3_{\rm AJ}(M)=0$. (This ``basic fact''
can be proven using the Bloch--Srinivas ``decomposition of the diagonal'' argument, cf. \cite[Remark 1.8.1]{4}.) Since $H^3(Y,\QQ)=0$, the inclusion $A^2_{\rm AJ}(Y)\subset A^2_{\rm hom}(Y)$ is an equality.

Note that $Y$ is Fano, hence rationally connected and so $A_0(Y)\cong\QQ$. Thus, to prove the proposition it only remains to prove that $A_1^{\rm hom}(Y)=0$.
To prove this vanishing, we reason {\em family-wise\/}, and exploit instances of the {\em Franchetta property\/}.

In a first step, let us treat the case of {\em ordinary\/} Gushel--Mukai fivefolds. That is, we write 
  \[ B\ \subset\ \bar{B}:=\PP H^0\bigl(\Gr(2,5),\OO_{\Gr(2,5)}(2)\bigr) \cong\PP^r\ ,\]
  where $B$ is the Zariski open parametrizing smooth dimensionally transverse hypersurfaces $Y_b\subset \Gr(2,5)$. There is a universal family
  \[ \bar{\YY}:= \Bigl\{ (g,b)\ \in\ \Gr(2,5)\times \bar{B}\ \Big\vert\ g\in Y_b\Bigr\}\ \ \ \subset\ \Gr(2,5)\times \bar{B}\ , \]
 and a universal family $\YY\to B$ of smooth hypersurfaces.
  

We are in the set-up of Proposition \ref{Frtype} (with $M$ being the Grassmannian $\Gr(2,5)$), and so Proposition \ref{Frtype} gives us an inclusion
   \begin{equation}\label{equa} \ker \Bigl( GDA^{5}_B(Y\times Y)\to H^{10}(Y\times Y,\QQ)\Bigr)\ \ \subset\ \Bigl\langle (p_1)^\ast GDA^\ast_B(Y), (p_2)^\ast GDA^\ast_B(Y)  \Bigr\rangle\ .\end{equation}
   
Let us construct an interesting cycle in $GDA^{5}_B(Y\times Y)$ to which we can apply \eqref{equa}. For general $Y=Y_b$, Lemma \ref{ldk} gives us a smooth curve $F:=F^2_\sigma(Y)$ and a subvariety $P\subset F\times Y$ inducing a surjection
  \[ P_\ast\colon\ \     H^1(F,\QQ)\ \twoheadrightarrow\ H^5(Y,\QQ)\ .\]
  Writing $\FF\to B$ for the universal family of varieties of $\sigma$-planes, the subvariety $P$ naturally exists relatively, i.e. $P\in GDA^3(F\times Y)$. 
Since both $F$ and $Y$ verify the standard conjectures, the right-inverse to $P_\ast$ is correspondence-induced, i.e. there exists $Q\in A^{3}(Y\times F)$ such that
  \[ (P\circ Q)_\ast =\ide\colon\ \ H^5(Y,\QQ)\ \to\ H^5(Y,\QQ) \ \]
  (This follows as in \cite[Proof of Proposition 1.1]{V4}).\footnote{It seems likely that one can take $Q$ to be a multiple of the transpose of $P$. For this, one would need to know that the map $P_\ast$ of Lemma \ref{ldk} is compatible with cup-product up to a multiple; this is the point of view taken in \cite{V1} to create the inverse correspondence $Q$.}
  
  We now involve the (generically defined) CK decomposition $\pi^j_Y\in A^5(Y\times Y)$ given by Lemma \ref{ck}. The above means that for $Y=Y_b$ with $b\in B$ sufficiently general, there is vanishing
    \[  (\Delta_Y -P\circ Q)\circ \pi^5_Y=0\ \ \ \hbox{in}\ H^{10}(Y\times Y,\QQ)\ .\]
  
  Applying Voisin's Hilbert schemes argument \cite[Proposition 3.7]{V0}, \cite[Proposition 4.25]{Vo} (cf. also \cite[Proposition 2.10]{Lfam} for the precise form used here), we can assume that $Q$ is also generically defined, and hence
    \[  (\Delta_Y -P\circ Q)\circ \pi^5_Y\ \ \in\ GDA^5(Y\times Y)\ .\]
  Now we learn from \eqref{equa} that this cycle is decomposable, i.e,
   \[      (\Delta_Y -P\circ Q)\circ \pi^5_Y\ \ \in\ \Bigl\langle (p_1)^\ast GDA^\ast(Y), (p_2)^\ast GDA^\ast(Y)  \Bigr\rangle\ . \]
   This is true for $Y$ sufficiently general, but then thanks to the usual spread lemma \cite[Lemma 3.2]{Vo}, this extends over all of $B$. That is, for any $Y=Y_b$ with $b\in B$ we can write
   \[   (\Delta_Y -P\circ Q)\circ \pi^5_Y=\gamma\ \ \ \hbox{in}\ A^5(Y\times Y)\ ,\]   
   with $\gamma\in A^\ast(Y)\otimes A^\ast(Y)$. Since the $\pi^j_Y, j\not=5$ of Lemma \ref{ck} are decomposable (i.e. they are in $A^\ast(Y)\otimes A^\ast(Y)$), 
   this implies that we can write
   \[   \Delta_Y -P\circ Q=\gamma^\prime\ \ \ \hbox{in}\ A^5(Y\times Y)\ ,\]      
   with $\gamma^\prime\in A^\ast(Y)\otimes A^\ast(Y)$. Being decomposable, $\gamma^\prime$ does not act on Abel--Jacobi trivial cycles, and so
   \[ A^i_{\rm AJ}(Y)\ \xrightarrow{Q_\ast}\ A^{i-2}_{\rm AJ}(F)\ \xrightarrow{P_\ast}\ A^i_{\rm AJ}(Y) \]
   is the identity. But $F$ being a curve, the group in the middle vanishes for all $i$. This proves the vanishing
     \[   A^\ast_{\rm AJ}(Y)=0 \] 
 for ordinary Gushel--Mukai fivefolds $Y$. 
 The Kimura finite-dimensionality of $Y$ then follows from \cite[Theorem 4]{43}.
 
 In the second step of the proof, let us extend to {\em all\/} (i.e. both ordinary and special) Gushel--Mukai fivefolds. To this end, let 
 us write $\YY^\prime\to B^\prime$ for the universal family of all Gushel--Mukai fivefolds, where
  \[ B^\prime\ \subset\ \bar{B}^\prime:= \PP H^0(  \operatorname{CGr}(2,5) ,\OO_{}(1)\oplus \OO_{}(2)  )  \]
  is the dense Zariski open parametrizing smooth dimensionally transverse intersections. There exists a dense Zariski open $B^\prime_{\rm ord}\subset B^\prime$ parametrizing the ordinary Gushel--Mukai fivefolds (this is the locus where the section misses the summit of the cone $ \operatorname{CGr}(2,V_5)$). This means that there is a morphism $B^\prime_{\rm ord}\to B_{\rm ord}$ where
   \[ B_{\rm ord}\ \subset\ \bar{B}:= \PP H^0( \Gr(2,5)   ,\OO(2)  )  \]
  is the dense Zariski open parametrizing smooth dimensionally transverse intersections, and there is a base change diagram
  \[   \begin{array}[c]{ccc}   \YY^\prime & \to & \YY\\
                                             &&\\
                                             \downarrow&&\downarrow\\
                                             B^\prime_{\rm ord}&\to& \ \ B_{\rm ord} \ . \\
                                             \end{array}\]
                                             
   Thanks to the above first step, the relative projector $\pi^5_Y$ factors over the motive of the curve $F$, for each $Y=Y_b$ with $b\in B_{\rm ord}$.
 Doing the base change $B^\prime_{\rm ord}\to B_{\rm ord}$, and extending to all of $B^\prime$ using the spread lemma \cite[Lemma 3.2]{Vo}, the same is then true for all $b\in B^\prime$, i.e.
 for any Gushel--Mukai fivefold there is a split injection
   \[ h(Y)\ \hookrightarrow\ h(F)(-2)\oplus \bigoplus\one(\ast)\ \ \ \hbox{in}\ \MM_{\rm rat}\ ,\]
   where $F$ is a curve.
   This implies the vanishing of $A^4_{\rm hom}(Y)$, and also the vanishing $A^\ast_{\rm AJ}(Y)=0$ for any Gushel--Mukai fivefold $Y$. The Kimura finite-dimensionality of $Y$ then follows from the forementioned \cite[Theorem 4]{43} (or directly from the fact that $h(Y)$ is a submotive of a sum of twists of motives of curves).
\end{proof}

\begin{remark} Combining Proposition \ref{van} and the Bloch--Srinivas argument \cite{BS}, one finds that $H^5(Y,\QQ)$ is supported in codimension 2, i.e. the generalized Hodge conjecture is true for {\em any\/} Gushel--Mukai fivefold. This improves on \cite[Corollary 3.8]{Nag}, where the generalized Hodge conjecture was proven for the {\em general\/} ordinary Gushel--Mukai fivefold.
\end{remark}

\subsection{MCK for GM fivefolds}

In order to prove our main result (Theorem \ref{main}), we give a general criterion for having an MCK. The criterion applies to varieties $Y$ of odd dimension $\ge 5$ that are ``of curve-type'', i.e. $A^\ast_{\rm AJ}(Y)=0$.

\begin{proposition}\label{crit} Let $V$ be a smooth projective variety of dimension $2m\ge 6$, and let $Y\subset V$ be a smooth hypersurface defined by a very ample divisor $h=\OO_V(1)$ on $V$.
Assume the following conditions hold:

\noindent
(c1) $V$ has trivial Chow groups (i.e. $A^\ast_{\rm hom}(V)=0$) and $A^1(V)=\QQ[h]$;

\noindent
(c2) \[  A_i^{\rm hom}(Y)=0\ \ \ \forall\ i\le m-2\ ;\]

\noindent
(c3) The cycle class map induces an injection
  \[ \ima\Bigl( A^m(V)\to A^m(Y)\Bigr) \ \hookrightarrow\ H^{2m}(Y,\QQ)\ .\]

Then $Y$ has an MCK decomposition.
\end{proposition}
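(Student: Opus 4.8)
The plan is to build the candidate MCK decomposition $\{\pi^j_Y\}$ explicitly. Since $\dim Y = 2m-1$ is odd, I expect the even-indexed pieces $h^j(Y)$ for $j \neq 2m-1$ to be (sums of twists of) Tate motives, so that the only ``interesting'' piece is $h^{2m-1}(Y)$. First I would invoke Lemma \ref{ck}: because $Y$ is a smooth complete intersection of ample divisors in a variety $V$ with trivial Chow groups (here a single hypersurface), $Y$ carries a self-dual CK decomposition $\{\pi^j_Y\}$ with $h^j(Y) \cong \bigoplus \one(\ast)$ for all $j \neq 2m-1$; moreover this decomposition is generically defined. The hyperplane class $h|_Y$ together with condition (c1) pins down $A^1(Y) = \QQ[h|_Y]$ and, via the hard Lefschetz / complete-intersection structure, the even cohomology $H^{2i}(Y,\QQ)$ for $2i \neq 2m-1$ is spanned by powers of $h|_Y$ (possibly twisted by classes coming from $A^\ast(V)$). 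I would take $\pi^{2i}_Y$ for $2i < 2m-1$ to be the ``Lefschetz'' projectors built from these algebraic classes, $\pi^{2i}_Y$ for $2i > 2m-1$ their transposes, and $\pi^{2m-1}_Y := \Delta_Y - \sum_{j \neq 2m-1} \pi^j_Y$.

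The next step is to verify multiplicativity, i.e. $\pi^k_Y \circ \Delta^{\rm sm}_Y \circ (\pi^i_Y \times \pi^j_Y) = 0$ in $A^{2(2m-1)}(Y \times Y \times Y)$ whenever $i+j \neq k$. The standard strategy (as in Shen--Vial and the cubic-hypersurface papers) is to split the check into cases according to how many of $i,j,k$ equal the ``middle'' index $2m-1$. When all three of $i,j,k$ are $\neq 2m-1$, the relevant pieces of the motive are Tate, so the composite is a cycle in a group with trivial Chow groups and one only needs the cohomological statement — which is automatic because the cohomological Künneth decomposition is multiplicative. The cases where exactly one or exactly two of the indices equal $2m-1$ are the heart of the matter. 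Here I would use conditions (c2) and (c3): condition (c2), $A_i^{\rm hom}(Y) = 0$ for $i \le m-2$, translates (via Lemma \ref{triv}-type arguments applied to the relevant submotives, plus the fact that $h^{2m-1}(Y)$ sits inside a twist of a curve motive once Proposition \ref{van} is in force) into a \emph{Franchetta-type} vanishing for the correspondences in question; and condition (c3) guarantees that the generically defined codimension-$m$ cycles on $Y$ needed to control the ``diagonal'' contribution inject into cohomology, so that a cohomological vanishing upgrades to a Chow-theoretic one. Concretely, I would set up the ambient family $\YY \to B$, observe that all the correspondences $\pi^j_Y$, $\Delta^{\rm sm}_Y$ and their composites are generically defined, and then reduce each multiplicativity identity to: (a) its cohomological counterpart, which holds since cohomology has a multiplicative Künneth decomposition, plus (b) an application of the appropriate Franchetta property — Proposition \ref{Frtype} for the codimension-$(2m-1)$ cycles on $Y \times Y$, and (c2)/(c3) for the remaining groups — to conclude that a generically defined, homologically trivial correspondence is in fact rationally trivial.

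The main obstacle I anticipate is the mixed case where one index equals $2m-1$ and the other two do not — equivalently, controlling the product map $h^{2m-1}(Y) \otimes h^{2i}(Y) \to h(Y)$ and showing it lands in $h^{2m-1+2i}(Y)$ at the level of Chow groups, not just cohomology. Because $h^{2i}(Y)$ is Tate, this product is governed by intersecting $A^\ast_{\rm AJ}(Y)$-type classes with powers of $h|_Y$, and the delicate point is that the relevant generically defined cycles live in codimension $m$ (where Franchetta is \emph{not} expected to hold for $Y$ itself — indeed $A^m_{\rm hom}(Y)$ need not vanish) so one must instead use the \emph{restricted} injectivity statement (c3) together with the fact that the classes actually arising are pulled back from $A^m(V)$. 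Making this reduction precise — showing that every generically defined codimension-$m$ cycle entering the multiplicativity relations comes from $\ima(A^m(V) \to A^m(Y))$, so that (c3) applies — is where I would expect to spend the most care; the lower-codimension cases are handled cleanly by (c2) and Proposition \ref{Frtype}, and the top-codimension cases by duality.
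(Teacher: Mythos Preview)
Your overall strategy---take the generically defined CK decomposition of Lemma \ref{ck}, note that $h^j(Y)$ is Tate for $j\neq 2m-1$, and upgrade the cohomological multiplicativity to Chow via Franchetta-type arguments---matches the paper's approach. Two points, however, need correction.

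\textbf{The missing case $i=j=k=2m-1$.} You split by how many indices equal $2m-1$ but never treat the case where all three do; ``top-codimension cases by duality'' does not cover it. This case is exactly where the dimension hypothesis $2m\ge 6$ enters, and your plan never uses that hypothesis. The paper's argument: condition (c2) together with the Bloch--Srinivas decomposition of the diagonal (not Proposition \ref{van}, which is specific to Gushel--Mukai fivefolds) gives a split injection $h^{2m-1}(Y)\hookrightarrow h^1(C)(1-m)$ for some curve $C$. Then
\[
\Gamma_{2m-1,2m-1,2m-1}\ \in\ (\pi^{2m-1}_Y\times\pi^{2m-1}_Y\times\pi^{2m-1}_Y)_\ast A^{4m-2}(Y^3)\ \hookrightarrow\ A^{m+1}(C\times C\times C)\ ,
\]
and the right-hand side vanishes for dimension reasons precisely when $m\ge 3$. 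Without this step the proof is incomplete, and indeed the criterion fails for $m=2$ (cf.\ Remark \ref{pity}).

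\textbf{The mixed cases are easier than you think, once you prove the right Franchetta statement.} You anticipate the hardest part to be the case with exactly one odd index, and you plan to argue by carefully tracking that every generically defined codimension-$m$ class on $Y$ comes from $A^m(V)$. The paper instead proves the Franchetta property for $\YY\times_B\YY\to B$ in \emph{all} codimensions, and then every case with at least one even index is handled uniformly: since one of the three factors $h^{i}(Y)$ is Tate, $\Gamma_{ijk}$ lands (compatibly with generic definition) in a direct sum of copies of $GDA^\ast(Y\times Y)$, where it dies by Franchetta for the fibered square. The key technical ingredient you are missing is the identity
\[
\Delta_Y\cdot (p_j)^\ast(h)\ \in\ \bigl\langle (p_1)^\ast GDA^\ast(Y),\ (p_2)^\ast GDA^\ast(Y)\bigr\rangle\ ,
\]
which follows from $(\tau\times\tau)^\ast(\Delta_V)= {}^t\Gamma_\tau\circ\Gamma_\tau=\Delta_Y\cdot (p_j)^\ast(h)$ together with the decomposability of $\Delta_V$ (since $V$ has trivial Chow groups). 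This identity, plus the stratified-projective-bundle description of $\YY\times_{\bar B}\YY\to V\times V$, gives $GDA^\ast(Y\times Y)=\langle (p_1)^\ast GDA^\ast(Y),\,(p_2)^\ast GDA^\ast(Y),\,\Delta_Y\rangle$ and reduces Franchetta for the fibered square in every codimension to Franchetta for $\YY\to B$---which is exactly (c3). So (c3) is used once, to get Franchetta for $Y$, rather than repeatedly inside each multiplicativity check as your plan suggests.
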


\begin{proof} As so often, it helps to reason {\em family-wise\/}, and exploit instances of the {\em Franchetta property\/}. That is, we write 
  \[ B\ \subset\ \bar{B}:=\PP H^0\bigl(V,\OO_V(1)\bigr) \cong\PP^r\ ,\]
  where $B$ is the Zariski open parametrizing smooth dimensionally transverse hypersurfaces $Y_b\subset V$. There is a universal family
  \[ \bar{\YY}:= \Bigl\{ (v,b)\ \in\ V\times \bar{B}\ \Big\vert\ v\in Y_b\Bigr\}\ \ \ \subset\ V\times \bar{B}\ , \]
 and a universal family $\YY\to B$ of smooth hypersurfaces.
  
  The base-point-freeness of $\OO_V(1)$ ensures that each point in $V$ imposes one condition on $\bar{B}$, and so
    \[  \bar{\YY}\ \to\ V \]
  is a $\PP^{r-1}$-bundle. This readily gives (cf. for instance \cite[Proposition 5.2]{FLV}) the equality
   \[  GDA^\ast(Y):=GDA^\ast_B(Y)  =\ima\Bigl( A^\ast(V)\to A^\ast(Y)\Bigr)  \ .\]
    
Condition (c2), plus the Bloch--Srinivas argument \cite{BS}, ensures that there exists a curve $C$ and a split injection of motives
  \begin{equation}\label{injmot}  h(Y)\ \hookrightarrow\ h(C)(1-m) \oplus \bigoplus\one(\ast)\ \ \ \hbox{in}\ \MM_{\rm rat}\ ,\end{equation}
and in particular
$Y=Y_b$ has only one non-trivial Chow group:
  \[ A^i_{\rm hom}(Y)=0\ \ \ \forall\ i\not=m\ .\]
  Condition (c3) ensures that $GDA^m(Y)$ injects into cohomology under the cycle class map, and so $\YY\to B$ has the Franchetta property.
  
  Let us now turn to $\YY\times_B \YY$. The very ampleness of $\OO_V(1)$ ensures that 2 different points in $V$ impose two independent conditions on $\bar{B}$, and so
    \[  \bar{\YY}\times_{\bar{B}}\ \bar{\YY} \ \to\ V\times V \]
  is a $\PP^{r-2}$-bundle over $(V\times V)\setminus \Delta_V$, and a $\PP^{r-1}$-bundle over the diagonal $\Delta_V$. This readily gives (for instance, by applying \cite[Proposition 5.2]{FLV}) the equality
   \[ \begin{split}  GDA^\ast(Y\times Y):=GDA^\ast_B(Y\times Y)  =\ima\Bigl( A^\ast(V\times V)\to A^\ast(Y\times Y)\Bigr) +&\\    \Delta_\ast \ima   \Bigl( A^\ast(V)\to &A^\ast(Y)\Bigr)   \ ,\\
   \end{split}\]
   where $\Delta\colon Y\hookrightarrow Y\times Y$ denotes the diagonal embedding.
  The assumption that $V$ has trivial Chow groups implies that the product map induces an isomorphism
    \[ A^\ast(V)\otimes A^\ast(V)\ \xrightarrow{\cong}\   A^\ast(V\times V)\ .\]
  Moreover, pushing forward along $\Delta$ is the same as pulling back under one of the projections $p_j\colon Y\times Y\to Y$ and intersecting with $\Delta_Y$.
  
  The above equality thus boils down to
   \[  GDA^\ast(Y\times Y) = \Bigl\langle  (p_1)^\ast GDA^\ast(Y), (p_2)^\ast GDA^\ast(Y), \Delta_Y\Bigr\rangle\ \]
   (where we write $\langle - , -\rangle$ for the $\QQ$-algebra generated by the elements in brackets). Let us now verify that the restriction of the cycle class map 
   \begin{equation}\label{want} GDA^i(Y\times Y)\ \to\ H^{2i}(Y\times Y,\QQ)\  \end{equation}  
   is injective for all $i$
 (in other words, that $\YY\times_B \YY\to B$ has the Franchetta property). Since the split injection \eqref{injmot} is generically defined, it induces an injection
   \[ GDA^i(Y\times Y)\ \hookrightarrow\ A^{i+2-2m}(C\times C)\oplus \bigoplus GDA^\ast(Y)\oplus \QQ^s\ .\]
   In particular, for $i>2m$ and for $i<2m-1$ the injectivity of \eqref{want} follows from the Franchetta property for $\YY\to B$. 
   
   It only remains to check injectivity of \eqref{want} for $i=2m$ and for $i=2m-1$:
   
   For $i=2m$, we observe that
   \begin{equation}\label{exc}  \Delta_Y\cdot (p_j)^\ast(h) \ \ \in\  \Bigl\langle  (p_1)^\ast GDA^\ast(Y), (p_2)^\ast GDA^\ast(Y)\Bigr\rangle   \ ,\end{equation}
   and so the injectivity of \eqref{want} for $i=2m$ again reduces to the Franchetta property for $\YY\to B$. The fact \eqref{exc} follows from the excess intersection formula, or alternatively one can reason as follows:  let $\tau\colon Y\hookrightarrow V$ denote the inclusion morphism. There is equality
   \[    \Delta_Y\cdot (p_j)^\ast(h) =  {}^t \Gamma_\tau\circ \Gamma_\tau
                                                                        = (\tau\times\tau)^\ast(\Delta_V)   \ \ \ \hbox{in}\ A^{2m}(Y\times Y)\ \]
                                                                        (where the second equality follows from Lieberman's lemma). But $V$ having trivial Chow groups, the diagonal $\Delta_V$ is completely decomposed, i.e. $\Delta_V\in A^\ast(V)\otimes A^\ast(V)$. This proves \eqref{exc}.
                                                                        
Finally for $i=2m-1$, let us note that without loss of generality we may suppose that $H^{2m-1}(Y,\QQ)\not=0$ (indeed, if $H^{2m-1}(Y,\QQ)=0$ then $A^\ast_{\rm hom}(Y)=0$ and so $Y$ has an MCK decomposition for trivial reasons). This means that the class of $\Delta_Y$ in cohomology is linearly independent from the decomposable cycles $\langle  (p_1)^\ast GDA^\ast(Y), (p_2)^\ast GDA^\ast(Y)\rangle$, i.e. 
  \[  \begin{split}  &\ima \Bigl(      GDA^{2m-1}(Y\times Y)\to H^{4m-2}(Y\times Y,\QQ)\Bigr)=\\     &\ima \Bigl( \Bigl\langle  (p_1)^\ast GDA^\ast(Y), (p_2)^\ast GDA^\ast(Y)\Bigr\rangle\to H^{4m-2}(Y\times Y,\QQ)\Bigr) + \QQ[\Delta_Y]\ .\\
  \end{split}\]
  The injectivity of \eqref{want} for $i=2m-1$ thus reduces to the Franchetta property for $\YY\to B$. We have now proven that $\YY\times_B \YY\to B$ has the Franchetta property.
  


 To conclude, let us now establish that the CK decomposition of Lemma \ref{ck} is MCK. By definition, what we need to check is that the cycle
   \[  \Gamma_{ijk}:=   \pi_Y^k\circ \Delta_Y^{\rm sm}\circ (\pi_Y^i\times \pi_Y^j)\ \ \ \in\ A^{4m-2}(Y\times Y\times Y) \]
   is zero for all $i+j\not=k$.
   
  Let us assume at least one of the integers $i,j,k$ is even. In this case, there is an injection
    \[ \Gamma_{ijk}\ \ \in \ ( \pi^{2m-1-i}_Y\times \pi^{2m-1-j}_Y \times \pi^k_Y)_\ast  A^{4m-2}(Y\times Y\times Y)\ \hookrightarrow\ \bigoplus A^\ast(Y\times Y)\ ,\]
    and this injection sends generically defined cycles to generically defined cycles. But $\Gamma_{ijk}$ is generically defined and homologically trivial, and so the Franchetta property for $\YY\times_B \YY\to B$ gives the required vanishing  $\Gamma_{ijk}=0$.
    
   Next, let us assume $i=j=k=2m-1$. In this case, the injection of motives 
     \[ h^{2m-1}(Y)\ \hookrightarrow\  h^1(C)(1-m)\] 
  induces an injection of Chow groups
   \[ \Gamma_{ijk}\ \ \in\ ( \pi^{2m-1}_Y\times \pi^{2m-1}_Y \times \pi^{2m-1}_Y)_\ast  A^{4m-2}(Y\times Y\times Y)\ \hookrightarrow\   A^{m+1}(C\times C\times C)\ .\]
   But the right-hand side vanishes for dimension reasons for any $m\ge 3$, and so $\Gamma_{ijk}=0$.    
   
\end{proof}

We are now ready to prove the main result of this note:

\begin{theorem}\label{main} Let $Y$ be a Gushel--Mukai fivefold. Then $Y$ has an MCK decomposition.
\end{theorem}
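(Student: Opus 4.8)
The plan is to deduce Theorem~\ref{main} from the criterion of Proposition~\ref{crit}, fed by the vanishing of Proposition~\ref{van}, and then to handle special Gushel--Mukai fivefolds by a degeneration argument.

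\emph{Step 1: ordinary Gushel--Mukai fivefolds.} Here $Y$ is a smooth divisor $Y\in|\OO_V(2)|$ in $V=\Gr(2,5)$, which has dimension $2m=6$, so $m=3$. Since $\OO_V(2)$ is very ample the proof of Proposition~\ref{crit} applies (take $\OO_V(1):=\OO_V(2)$; that proof only uses that the defining linear system is very ample and that $A^1(V)$ is one-dimensional). Condition~(c1) is classical: $\Gr(2,5)$ is cellular, hence has trivial Chow groups, and $A^1(\Gr(2,5))=\QQ$. Condition~(c2) asks that $A^{\rm hom}_i(Y)=0$ for $i\le m-2=1$, i.e. that $A^4_{\rm hom}(Y)=A^5_{\rm hom}(Y)=0$, which is part of Proposition~\ref{van}. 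The one condition needing work is~(c3): the image of $A^3(\Gr(2,5))$ in $A^3(Y)$ should inject into $H^6(Y,\QQ)$. Since $\Gr(2,5)$ has trivial Chow groups we have $A^3(\Gr(2,5))\xrightarrow{\cong}H^6(\Gr(2,5),\QQ)$, so it suffices to show that the restriction $\iota^\ast\colon H^6(\Gr(2,5),\QQ)\to H^6(Y,\QQ)$ is injective, where $\iota\colon Y\hookrightarrow\Gr(2,5)$. Composing with the Gysin map $\iota_\ast$ gives cup-product with $[Y]=2h$ ($h$ the Pl\"ucker class); by the Pieri rule the two codimension-$3$ Schubert classes satisfy $\sigma_{(3)}\cdot h=\sigma_{(3,1)}$ and $\sigma_{(2,1)}\cdot h=\sigma_{(3,1)}+\sigma_{(2,2)}$, so $\cdot\,h$, hence $\cdot\,2h$, is injective on the two-dimensional space $H^6(\Gr(2,5),\QQ)$. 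Thus $\iota^\ast$ is injective, (c3) holds, and Proposition~\ref{crit} yields an MCK decomposition for every ordinary Gushel--Mukai fivefold.

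\emph{Step 2: special Gushel--Mukai fivefolds.} Such a $Y$ is a double cover of a five-dimensional linear section of $\Gr(2,5)$; in particular it is not a hypersurface in a variety of Picard number one with trivial Chow groups, so Proposition~\ref{crit} does not apply directly, and I would argue by degeneration, mirroring the second step of the proof of Proposition~\ref{van}. Inside the universal family $\YY^\prime\to B^\prime$ of all Gushel--Mukai fivefolds, the ordinary ones form a dense Zariski-open $B^\prime_{\rm ord}$ equipped with a morphism $B^\prime_{\rm ord}\to B_{\rm ord}$ to the parameter space of Step~1, fitting in a base-change square. Pulling back the relative Chow--K\"unneth projectors $\pi^j_\YY$ of Lemma~\ref{ck} along this morphism and taking Zariski closures inside $\YY^\prime\times_{B^\prime}\YY^\prime$ produces relative cycles $\pi^j_{\YY^\prime}$ over all of $B^\prime$. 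These still constitute a (self-dual) Chow--K\"unneth decomposition on every fibre, including special ones: their cohomology classes form a flat family of idempotents coinciding with the K\"unneth projectors over the dense open $B^\prime_{\rm ord}$, hence everywhere, and the relations $\pi^j_{\YY^\prime}\circ\pi^k_{\YY^\prime}=\delta_{jk}\,\pi^j_{\YY^\prime}$, holding over $B^\prime_{\rm ord}$, extend over $B^\prime$ by the spread lemma \cite[Lemma~3.2]{Vo}. Finally, by Step~1 the multiplicativity relations $\pi^k_Y\circ\Delta^{\rm sm}_Y\circ(\pi^i_Y\times\pi^j_Y)=0$ ($i+j\neq k$) hold over the dense open $B^\prime_{\rm ord}$; being generically defined over $B^\prime$, they extend to all of $B^\prime$ by the same spread lemma, which completes the proof.

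The main obstacle is Step~2: Proposition~\ref{crit} is designed for hypersurfaces, a description ordinary but not special Gushel--Mukai fivefolds enjoy, so the special case is forced through by spreading, and the fiddly bookkeeping consists in extending the relative projectors over the locus of special fivefolds, checking that they remain a cohomologically correct Chow--K\"unneth decomposition there, and spreading the multiplicativity identities. By contrast (c3) is a one-line Schubert-calculus computation and (c1)--(c2) are immediate. (Alternatively, one could try to spread the Franchetta property for $\YY\times_B\YY\to B$ established inside the proof of Proposition~\ref{crit} to the family $\YY^\prime\times_{B^\prime}\YY^\prime\to B^\prime$ of all Gushel--Mukai fivefolds and re-run the last paragraph of that proof, using the split injection $h(Y)\hookrightarrow h(F)(-2)\oplus\bigoplus\one(\ast)$ from Proposition~\ref{van}.)
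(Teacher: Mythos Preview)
Your proposal is correct and follows essentially the same route as the paper: verify the hypotheses of Proposition~\ref{crit} for ordinary Gushel--Mukai fivefolds (with (c2) supplied by Proposition~\ref{van} and (c3) by the injectivity of $\cdot\,h\colon A^3(\Gr(2,5))\to A^4(\Gr(2,5))$, which the paper phrases as hard Lefschetz and you compute via Pieri), and then spread the resulting generically defined MCK projectors from $B^\prime_{\rm ord}$ to all of $B^\prime$ using \cite[Lemma~3.2]{Vo}. Your Step~2 is exactly the paper's second step, with a bit more bookkeeping spelled out.
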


\begin{proof} First, let us treat the case of {\em ordinary\/} Gushel--Mukai fivefolds. We check that ordinary Gushel--Mukai fivefolds $Y$ verify the hypotheses of Proposition \ref{crit}, with
$V$ being the Grassmannian $\Gr(2,5)$ (embedded via the second tensor power of the Pl\"ucker line bundle). Clearly condition (c1) of Proposition \ref{crit} is satisfied. Condition (c2) is Proposition \ref{van}. Condition (c3) is verified, because one has
  \[ \dim A^3(\Gr(2,5))=2 =  \dim A^4(\Gr(2,5))\ ,\]
  and so by hard Lefschetz the composition
   \[ A^3(\Gr(2,5))\ \to\ A^3(Y)\ \to\ A^4(\Gr(2,5)) \]
   is an isomorphism. It follows that ordinary Gushel--Mukai fivefolds $Y$ have a (generically defined) MCK decomposition.

Next, let us extend to {\em all\/} (i.e. both ordinary and special) Gushel--Mukai fivefolds. To this end, let $ \operatorname{CGr}(2,V_5)$ be the cone over the Grassmannian as before. Let us
 write $\YY^\prime\to B^\prime$ for the universal family of all Gushel--Mukai fivefolds, where
  \[ B^\prime\ \subset\ \bar{B}^\prime:= \PP H^0(  \operatorname{CGr}(2,5) ,\OO_{}(1)\oplus \OO_{}(2)  )  \]
  is the dense Zariski open parametrizing smooth dimensionally transverse intersections. 
As in the proof of Proposition \ref{van} above, there is a base change diagram
  \[   \begin{array}[c]{ccc}   \YY^\prime_{} & \to & \YY\\
                                             &&\\
                                             \downarrow&&\downarrow\\
                                             B^\prime_{\rm ord}&\to& \ \ B_{\rm ord} \ . \\
                                             \end{array}\]
                                             
   Thanks to the above first step, we have relative projectors $\pi^j_\YY\in A^{5}(\YY\times_{B_{\rm ord}}\YY)$ which fiberwise give an MCK decomposition.
   Pulling them back via the base change $B^\prime_{\rm ord}\to B_{\rm ord}$ and extending over all of $B^\prime$, we obtain relative projectors $\pi^j_{\YY^\prime}\in A^{5}(\YY^\prime\times_{B^\prime_{}}\YY^\prime)$ which for any fiber over $B^\prime_{\rm ord}$ give an MCK decomposition. The usual spread argument \cite[Lemma 3.2]{Vo} then 
   gives that the restriction to any fiber over $B^\prime$ gives an MCK decomposition, and so we conclude that any Gushel--Mukai fivefold has a (generically defined) MCK decomposition.
     \end{proof}

For later use, let us isolate a consequence of the above proof:

\begin{corollary}\label{Frord} Let $\YY\to B$ be the universal family of ordinary Gushel--Mukai fivefolds. The families $\YY\to B$ and $\YY\times_{B}\YY\to B$ have the Franchetta property.
\end{corollary}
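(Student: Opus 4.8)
The plan is to note that the statement has, in effect, already been established inside the proofs of Proposition \ref{crit} and Theorem \ref{main}. Recall that in proving Theorem \ref{main} we checked that an ordinary Gushel--Mukai fivefold $Y\subset \Gr(2,5)$ (the Grassmannian being embedded via the square of the Pl\"ucker bundle, so that $\dim \Gr(2,5)=6$ and $m=3$ in the notation of Proposition \ref{crit}) satisfies hypotheses (c1), (c2), (c3) of Proposition \ref{crit}. The proof of Proposition \ref{crit}, in turn, first deduces from (c1)--(c3) that both $\YY\to B$ and $\YY\times_B\YY\to B$ have the Franchetta property, and only afterwards constructs the MCK decomposition. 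Hence it suffices to isolate those two intermediate conclusions; no new argument is needed.

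For convenience, let us recall the two mechanisms. Base-point freeness of $\OO_{\Gr(2,5)}(2)$ makes $\bar{\YY}\to \Gr(2,5)$ a projective bundle, so that $GDA^\ast_B(Y)=\ima\bigl(A^\ast(\Gr(2,5))\to A^\ast(Y)\bigr)$ (see \cite[Proposition 5.2]{FLV}); combining this with Proposition \ref{van} (which forces $A^i_{\rm hom}(Y)=0$ for $i\neq 3$) and with condition (c3) --- itself a consequence of $\dim A^3(\Gr(2,5))=2=\dim A^4(\Gr(2,5))$ and hard Lefschetz --- one sees that $GDA^\ast_B(Y)$ injects into $H^\ast(Y,\QQ)$, i.e. $\YY\to B$ has the Franchetta property. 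For the self-product, the very ampleness of $\OO_{\Gr(2,5)}(2)$ makes $\bar{\YY}\times_{\bar{B}}\bar{\YY}\to \Gr(2,5)\times\Gr(2,5)$ a stratified projective bundle, which gives $GDA^\ast_B(Y\times Y)=\bigl\langle (p_1)^\ast GDA^\ast_B(Y),(p_2)^\ast GDA^\ast_B(Y),\Delta_Y\bigr\rangle$; running the degree-by-degree analysis of the proof of Proposition \ref{crit} --- using the generically defined splitting of motives $h(Y)\hookrightarrow h(C)(-2)\oplus\bigoplus\one(\ast)$ for a curve $C$, the identity $\Delta_Y\cdot(p_j)^\ast h=(\tau\times\tau)^\ast\Delta_{\Gr(2,5)}$, and the linear independence of $[\Delta_Y]$ from decomposable classes in $H^{10}(Y\times Y,\QQ)$ --- then shows that $GDA^\ast_B(Y\times Y)$ injects into cohomology, i.e. $\YY\times_B\YY\to B$ has the Franchetta property.

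There is thus no genuine obstacle to overcome here; the only delicate point --- handled exactly as in the proofs above --- is to make sure that the adjective \emph{generically defined} is preserved at each step (the projective-bundle formula, the Bloch--Srinivas decomposition of the diagonal, and, where needed, Voisin's Hilbert-scheme spreading argument). With that bookkeeping in place, the corollary follows.
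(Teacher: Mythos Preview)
Your proposal is correct and follows exactly the same approach as the paper's own proof, which is simply a two-sentence pointer: the hypotheses (c1)--(c3) of Proposition~\ref{crit} were verified for ordinary Gushel--Mukai fivefolds in the proof of Theorem~\ref{main}, and the Franchetta property for $\YY\to B$ and $\YY\times_B\YY\to B$ is established as an intermediate step inside the proof of Proposition~\ref{crit}. Your write-up merely expands this pointer by recalling the relevant mechanisms (projective-bundle description of $GDA^\ast$, the stratified projective bundle for the self-product, and the degree-by-degree analysis), all of which match the paper.
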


\begin{proof} As we have checked in the proof of Theorem \ref{main}, ordinary Gushel--Mukai fivefolds verify the conditions of Proposition \ref{crit}. Looking at the proof of Proposition \ref{crit}, the result follows.
\end{proof}

\begin{remark}\label{pity} In all likelihood, Gushel--Mukai {\em threefolds\/} also have an MCK decomposition. Unfortunately, our criterion (Proposition \ref{crit}) does not apply because of the dimension condition. For a Gushel--Mukai threefold $Y$, it remains to prove the vanishing
  \[ \pi^3_Y\circ \Delta_Y^{\rm sm}\circ (\pi^3_Y\times \pi^3_Y)\stackrel{??}{=}0\ \ \ \hbox{in}\ A^6(Y^3)\ ,\]
  i.e. one would need an instance of the Franchetta property for $Y^3$. 
\end{remark}

\subsection{Further examples} Here is another family of Fano varieties (of dimension 17) to which our general criterion applies:

\begin{theorem}\label{main2} Let $Y$ be a smooth hyperplane section
  \[  Y:=\Gr(3,9)\cap H  \ \ \subset\ \PP^{83} \]
  (relative to the Pl\"ucker embedding). Then $Y$ has an MCK decomposition.
\end{theorem}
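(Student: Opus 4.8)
The plan is to verify that $Y = \Gr(3,9) \cap H$ satisfies the three hypotheses of the criterion in Proposition \ref{crit}, with ambient variety $V = \Gr(3,9)$ and $h = \OO_V(1)$ the Plücker polarization, noting that $\dim V = 3 \cdot 6 = 18 = 2m$, so $m = 9 \geq 3$ and $\dim Y = 17$ is odd, exactly as required.

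First I would check condition (c1): the Grassmannian $\Gr(3,9)$ has trivial Chow groups (it has a cellular decomposition, so $A^\ast(\Gr(3,9)) \cong H^\ast(\Gr(3,9),\QQ)$ by the standard Schubert-cell argument, confirming Lemma \ref{triv}(\rom2)), and $A^1(\Gr(3,9)) = \QQ[h]$ since the Grassmannian has Picard number $1$. Next, condition (c3): one needs the image of $A^9(V) \to A^9(Y)$ to inject into $H^{18}(Y,\QQ)$. As in the proof of Theorem \ref{main}, the clean way is to observe that $\dim A^9(\Gr(3,9)) = \dim A^{10}(\Gr(3,9))$ — both counting Young diagrams fitting in a $3 \times 6$ box of the appropriate sizes — and then hard Lefschetz forces the composition $A^9(V) \to A^9(Y) \to A^{10}(V)$ (cup with $h$ on $Y$, then pushforward) to be an isomorphism of the same rank, hence $A^9(V) \to A^9(Y)$ is injective into cohomology. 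Here one must actually compute these two Betti numbers of $\Gr(3,9)$ and check they agree; I would do this via the Gaussian binomial coefficient / $q$-analogue $\binom{9}{3}_q$, reading off the coefficients of $q^9$ and $q^{10}$ (the middle is degree $9$ in the real grading $= $ complex degree $9$), which by the palindromic symmetry of $\binom{9}{3}_q$ about its center are automatically equal — so in fact condition (c3) holds by symmetry alone, identically to the $\Gr(2,5)$ case.

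The genuine work is condition (c2): one must show $A_i^{\rm hom}(Y) = 0$ for all $i \leq m-2 = 7$, i.e. $A^j_{\rm hom}(Y) = 0$ for $j \geq 10$ (equivalently $A^j_{\rm AJ}$ and $A^j_{\rm hom}$ vanish in all codimensions except the middle one $j = 9$). The Hodge-theoretic input is that all the cohomology of $Y$ outside degree $17$ is of Tate/Hodge–Tate type — by the Lefschetz hyperplane theorem the cohomology of $Y$ below the middle agrees with that of $\Gr(3,9)$, which is algebraic, and the primitive middle cohomology $H^{17}(Y)$ is "of curve type" or at worst carries the only transcendental part. The cleanest route to the Chow-group vanishing, mirroring Proposition \ref{van}, is: establish $A_0^{\rm hom}(Y) = A_1^{\rm hom}(Y) = \cdots$ vanish for low-dimensional cycles by rational connectedness ($Y$ Fano gives $A_0 = \QQ$) and by a Bloch–Srinivas decomposition-of-the-diagonal argument, which propagates the vanishing of $A_0^{\rm hom}$ and $A_1^{\rm hom}$ to $A^2_{\rm AJ}, A^3_{\rm AJ}$, etc.; combined with the Hodge-Tate structure of the relevant cohomology groups, one gets $A^j_{\rm hom}(Y) = 0$ for $j \leq 7$. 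The subtle point — and the main obstacle — is whether one can get the vanishing of $A_1^{\rm hom}(Y)$ (cycles of dimension $1$, i.e. codimension $16$) and more generally $A_i^{\rm hom}$ up to $i = 7$ purely from the Hodge structure plus Bloch–Srinivas, or whether, as in the fivefold case, one needs an additional geometric input (a family of planes or subvarieties sweeping out $Y$ and inducing a surjection onto the relevant cohomology, combined with the Franchetta property for the universal family $\Gr(3,9) \cap H \to \PP^{83}$ of hyperplane sections).

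Concretely I would argue: since $H^k(Y,\QQ)$ for $k \neq 17$ is generated by algebraic classes and $H^k(Y,\QQ) = 0$ for $k$ odd, $k \neq 17$, a Bloch–Srinivas-type argument (as invoked in \cite{BS} and used in Proposition \ref{crit}'s proof via \eqref{injmot}) shows that once $A_i^{\rm hom}(Y) = 0$ for all $i$ in the range forced by the Hodge coniveau, there is a split injection $h(Y) \hookrightarrow h(C)(1-m) \oplus \bigoplus \one(\ast)$ for a curve $C$; but to \emph{get} into that range one still needs the vanishing for small $i$. Here I expect one can exploit the family-wise / Franchetta method exactly as in Proposition \ref{van}: write $\YY \to B$ for the universal family of smooth hyperplane sections of $\Gr(3,9)$, apply Proposition \ref{Frtype} (with $M = \Gr(3,9)$, which has trivial Chow groups) to get that kernels of the cycle class map in $GDA^{17}_B(Y \times Y)$ are decomposable, exhibit a generically-defined correspondence to a curve giving a right inverse to the surjection on $H^{17}$, spread it out via Voisin's Hilbert-scheme argument, and deduce $A^\ast_{\rm AJ}(Y) = 0$. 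This requires producing the curve and the surjection $H^1(\text{curve}) \twoheadrightarrow H^{17}(Y,\QQ)$ — the analogue of Lemma \ref{ldk}, possibly from a family of linear subspaces (e.g. planes or higher-dimensional linear sections of $\Gr(3,9)$) contained in $Y$. Establishing that such a "curve-type" resolution of $H^{17}(Y)$ exists, and checking the numerical/dimension conditions making the Franchetta input applicable, is the step I expect to absorb most of the effort; once (c1), (c2), (c3) are in hand, Proposition \ref{crit} delivers the MCK decomposition immediately.
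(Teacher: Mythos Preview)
Your overall strategy---verify conditions (c1), (c2), (c3) of Proposition~\ref{crit} with $V=\Gr(3,9)$ and $m=9$---is exactly what the paper does. Two points deserve correction or comment.

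\textbf{Condition (c3): the symmetry argument is wrong.} The Gaussian binomial $\binom{9}{3}_q$ is a polynomial of degree $18$, palindromic about degree $9$. Palindromic symmetry therefore gives $[q^{10}]=[q^{8}]$, \emph{not} $[q^{10}]=[q^{9}]$; the coefficients at $q^9$ and $q^{10}$ are not paired by this symmetry. (The same remark applies to $\Gr(2,5)$: symmetry about degree $3$ gives $[q^4]=[q^2]$, not $[q^4]=[q^3]$.) The equality $\dim A^9(\Gr(3,9))=\dim A^{10}(\Gr(3,9))$ is true---both are $8$---but it must actually be computed, for instance by listing the partitions of $9$ and $10$ contained in a $3\times 6$ box, or via \cite[Theorem 5.26]{3264} as the paper does. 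Once this is checked, your hard Lefschetz argument for (c3) is correct and matches the paper's.

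\textbf{Condition (c2): the paper does not reprove it.} The paper simply invokes \cite[Corollary 4.2]{hyp}, where the vanishing $A^j_{\rm hom}(Y)=0$ for all $j\neq 9$ is established for smooth Pl\"ucker hyperplane sections of Grassmannians (under suitable hypotheses). Your sketch of a Franchetta/spread argument \`a la Proposition~\ref{van} is plausible in spirit, and indeed the methods of \cite{hyp} are related, but you should be aware that the required geometric input---the analogue of Lemma~\ref{ldk}, i.e.\ an explicit family of linear spaces on $Y$ inducing a surjection onto $H^{17}(Y,\QQ)$---is not supplied in the present paper; it is packaged inside the cited reference. If you intend a self-contained proof you would need to locate or construct this input; otherwise, the correct move here is simply to cite \cite{hyp}.
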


\begin{proof} Let us check that the conditions of Proposition \ref{crit} are verified. Clearly condition (c1) of Proposition \ref{crit} is satisfied. Condition (c2) is \cite[Corollary 4.2]{hyp}, where it is proven that
  \[ A^j_{\rm hom}(Y)=0\ \ \ \forall\ j\not=  9\ .\]
As for condition (c3), we observe that
  \[ \dim A^9(\Gr(3,9))= 8    =  \dim A^{10}(\Gr(3,9))\ \]
  (this is readily checked, using for instance \cite[Theorem 5.26]{3264}).
 By hard Lefschetz, it then follows that the composition
   \[ A^9(\Gr(3,9))\ \to\ A^9(Y)\ \to\ A^{10}(\Gr(3,9)) \]
   is an isomorphism. Since $\Gr(3,9)$ has trivial Chow groups, this implies condition (c3). All conditions of Proposition \ref{crit} are verified, and so the theorem is proven.
\end{proof}

\begin{remark} Varieties $Y$ as in Theorem \ref{main2} are studied in \cite[Section 5.1]{BFM}, where they are related to Coble cubics and abelian surfaces. As shown in loc. cit., the Hodge number $h^{9,8}(Y)=2$, and conjecturally there is a genus 2 curve showing up in the derived category of $Y$.
 \end{remark}

\begin{remark} Other varieties to which Proposition \ref{crit} might {\em perhaps\/} apply are intersections of 3 quadrics in $\PP^{2m+2}$ with $m\ge 3$. Taking $V$ to be the intersection of 2 quadrics in $\PP^{m+2}$, the only problem consists in checking condition (c3); I have not been able to do so.
\end{remark}

 \section{The tautological ring}
 
 \begin{corollary}\label{cor1} Let $Y$ be a Gushel--Mukai fivefold, and $m\in\NN$. Let
  \[ R^\ast(Y^m):=\Bigl\langle (p_i)^\ast A^1(Y), \,  (p_i)^\ast A^2(Y), \, (p_{ij})^\ast(\Delta_Y)\Bigr\rangle\ \subset\ \ \ A^\ast(Y^m)   \]
  be the $\QQ$-subalgebra generated by (pullbacks of) divisors, codimension 2 cycles and (pullbacks of) the diagonal $\Delta_Y\in A^5(Y\times Y)$. (Here $p_i$ and $p_{ij}$ denote the various projections from $Y^m$ to $Y$ resp. to $Y\times Y$).
  The cycle class map induces injections
   \[ R^\ast(Y^m)\ \hookrightarrow\ H^\ast(Y^m,\QQ)\ \ \ \hbox{for\ all\ }m\in\NN\ .\]
   \end{corollary}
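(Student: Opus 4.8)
The plan is to deduce Corollary \ref{cor1} from the existence of an MCK decomposition (Theorem \ref{main}) together with the Franchetta-type results already established, following the standard argument of Shen--Vial and its refinements (cf. \cite[Section 8]{SV}, \cite{FLV2}, \cite{Lfam}). The key structural input is that Theorem \ref{main} endows $A^\ast(Y)$ with a bigrading $A^i(Y)=\bigoplus_j A^i_{(j)}(Y)$ which is compatible with intersection product, and the same holds for all powers $Y^m$ (since an MCK decomposition for $Y$ induces one for $Y^m$ by taking products of projectors). One first checks that the generators of $R^\ast(Y^m)$ lie in the ``correct'' graded pieces: by Proposition \ref{van}, $A^1(Y)$ and $A^2(Y)$ are entirely in degree $0$ for the second grading (i.e. $A^1(Y)=A^1_{(0)}(Y)$ and $A^2(Y)=A^2_{(0)}(Y)$, because $A^1_{\rm hom}(Y)=A^2_{\rm hom}(Y)=0$ and the graded-$0$ piece conjecturally — here provably, via finite-dimensionality — captures all of homologically trivial-free part), while the diagonal $\Delta_Y$ decomposes as $\sum_i \pi^i_Y$ with each $\pi^i_Y$ generically defined. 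Hence $R^\ast(Y^m)$ is a subalgebra of $\bigoplus A^\ast_{(0)}(Y^m)$.

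Next I would invoke the general principle (e.g. \cite[Lemma 1.4]{FLV2} or the argument in \cite[Section 2]{Lfam}) that for a variety with MCK and with $A^\ast_{\rm AJ}=0$, the piece $A^\ast_{(0)}$ injects into cohomology provided one knows a suitable Franchetta property. Concretely, every element of $R^\ast(Y^m)$ is a polynomial in generically defined cycles on $Y$ and $Y\times Y$; pulling back and multiplying, one sees $R^\ast(Y^m)\subset GDA^\ast_{B^\prime}(Y^m)$ in an appropriate sense, or more precisely $R^\ast(Y^m)$ is spanned by products $(p_{i_1\cdots})^\ast(\alpha)$ with $\alpha\in GDA^\ast(Y)$ or $GDA^\ast(Y\times Y)$. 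One then reduces the injectivity on $Y^m$ to the Franchetta property for $\YY\times_B\YY\to B$ (Corollary \ref{Frord}) together with the motivic decomposition $h(Y)\hookrightarrow h(C)(-2)\oplus\bigoplus\one(\ast)$ from the proof of Proposition \ref{van}. Indeed, substituting this decomposition, $A^\ast(Y^m)$-classes built from the tautological generators map into a sum of terms of the form $A^\ast(C^a)\otimes(\text{Tate})$, and the transcendental contributions are controlled by the curve $C$: on a curve, $A^1_{\rm hom}(C)=A^1_{\rm AJ}(C)=0$ is false in general, but the relevant pieces appearing are only $A^{\le 1}(C^a)$ in weights forced by the bigrading, and there the cycle class map is injective because $A^\ast_{(0)}$ of a product of curves injects into cohomology (this is the hyperelliptic-curve-type phenomenon, cf. Remark \ref{tava} and \cite[Example 8.16]{SV}).

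The main obstacle is the bookkeeping in the last step: one must show that no ``hidden'' homologically trivial class can be manufactured inside $R^\ast(Y^m)$ by multiplying many pulled-back diagonals, i.e. that $R^\ast(Y^m)$ really does land in $A^\ast_{(0)}(Y^m)$ and that $A^\ast_{(0)}(Y^m)\cap A^\ast_{\rm hom}(Y^m)=0$. The first inclusion follows formally from multiplicativity of the bigrading once one knows the generators sit in bidegree-$0$ pieces, which in turn uses that $\pi^{2i}_Y$ acts as the identity on the algebraic classes and $\pi^{\rm odd}_Y$ contributes only through $h^5(Y)=h^1(C)(-2)$ in the single ``bad'' codimension $3$. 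The vanishing $A^\ast_{(0)}\cap A^\ast_{\rm hom}=0$ is exactly where one spends the Franchetta property: by Corollary \ref{Frord} it holds on $Y$ and $Y\times Y$, and one bootstraps to $Y^m$ via the curve decomposition, using that products of curves of the form arising here have the property that $A^\ast_{(0)}$ injects into cohomology. I would present this final bootstrapping carefully but without re-deriving the Shen--Vial formalism, citing \cite[Section 2]{Lfam} and \cite[Lemmas 1.4--1.6]{FLV2} for the mechanics.
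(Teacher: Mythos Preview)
Your reduction to ``products of curves'' in the final step is where the argument breaks. You correctly observe that $R^\ast(Y^m)\subset A^\ast_{(0)}(Y^m)$ once the generators sit in bidegree $0$ (this is fine: $A^1(Y)=A^1_{(0)}(Y)$ and $A^2(Y)=A^2_{(0)}(Y)$ follow from Proposition~\ref{van}, and $\Delta_Y\in A^5_{(0)}(Y\times Y)$ is a standard consequence of MCK). The problem is the vanishing $A^\ast_{(0)}(Y^m)\cap A^\ast_{\rm hom}(Y^m)=0$. You propose to deduce it from the split injection $h(Y)\hookrightarrow h(C)(-2)\oplus\bigoplus\one(\ast)$, but the curve $C$ arising here is \emph{not} known to be hyperelliptic, nor to have an MCK decomposition at all; hence there is no bigrading on $A^\ast(C^a)$ to speak of, and the claim that ``$A^\ast_{(0)}$ of a product of curves injects into cohomology'' is precisely what fails for the very general curve of genus $\ge 3$ (this is the Ceresa--cycle obstruction underlying Remark~\ref{tava} and \cite[Example~2.3]{FLV2}). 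The references you cite (\cite{Lfam}, \cite{FLV2}) do not supply a mechanism that circumvents this.

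The paper's proof avoids this trap by the Yin--type argument: one writes down a \emph{finite} presentation of the cohomological tautological ring $\bar R^\ast(Y^m)$ by explicit relations, and then lifts each relation to rational equivalence separately. Relations living in $R^\ast(Y)$ and $R^\ast(Y^2)$ are handled by the Franchetta property (Corollary~\ref{Frord}), the single triple relation $\tau_{ij}\cdot\tau_{ik}=\tau_{jk}\cdot o_i$ in $R^\ast(Y^3)$ is exactly a rewriting of the MCK condition, and --- this is the ingredient you are missing --- the one remaining relation, an $\mathfrak S_{22}$-symmetrized product of $\tau$'s in $R^\ast(Y^{22})$, is the cohomological shadow of $\sym^{22}h^5(Y)=0$, which holds in $\MM_{\rm rat}$ by Kimura finite-dimensionality (Proposition~\ref{van}). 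It is this last input, not a Franchetta property for high powers nor any special property of $C$, that closes the argument for all $m$ simultaneously.
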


\begin{proof} This is inspired by the analogous result for cubic hypersurfaces \cite[Section 2.3]{FLV3}, which in turn is inspired by analogous results for hyperelliptic curves \cite{Ta2}, \cite{Ta} (cf. Remark \ref{tava} below) and for K3 surfaces \cite{Yin}.

As in \cite[Section 2.3]{FLV3}, let us write 
  \[  o:={1\over 10} \, h^5\ \ \in\  A^5(Y)\ , \ \ \ \ c:=c_2(Q)\vert_Y\ \ \in\ A^2(Y)\] 
  (where $Q\to\Gr(2,5)$ is the universal quotient bundle), and
  \[ \tau:= \pi^5_Y=\Delta_Y - \, \sum_{j\not=5}  \pi^j_Y\ \ \in\ A^5(Y\times Y) \ ,\]
  where the $\pi^j_Y$ are as above.

Moreover, for any $1\le i<j\le m$ let us write 
  \[ \begin{split}   o_i&:= (p_i)^\ast(o)\ \ \in\ A^5(Y^m)\ ,\\
                        h_i&:=(p_i)^\ast(h)\ \ \in \ A^1(Y^m)\ ,\\
                        c_i&:=   (p_i)^\ast(c)\ \ \in\ A^2(Y^m)\ ,\\                       
                          \tau_{ij}&:=(p_{ij})^\ast(\tau)\ \ \in\ A^5(Y^m)\ .\\
                         \end{split}\]
 Note that (by definition) we have
   \[ R^\ast(Y^m)= \Bigl\langle    o_i, h_i, c_i, \tau_{ij}\Bigr\rangle\ \ \ \subset\ A^\ast(Y^m)\  .\]                   
                         
  Let us now define the $\QQ$-subalgebra
  \[ \bar{R}^\ast(Y^m):=\Bigl\langle o_i, h_i, c_i  , \tau_{ij}\Bigr\rangle \ \ \ \subset\   H^\ast(Y^m,\QQ) \]
  (where $i$ ranges over $1\le i\le m$, and $1\le i<j\le m$); this is the image of $R^\ast(Y^m)$ in cohomology. One can prove (just as \cite[Lemma 2.12]{FLV3} and \cite[Lemma 2.3]{Yin}) that the $\QQ$-algebra $ \bar{R}^\ast(Y^m)$
  is isomorphic to the free graded $\QQ$-algebra generated by $o_i,h_i,c_i, \tau_{ij}$, modulo the following relations:
    \begin{equation}\label{E:X'}
			\quad h_i \cdot o_i =c_i\cdot o_i= 0,   \quad c_i^3=0,  \quad c_i^2=\lambda\, c_i\cdot h_i^2 =\mu\, h_i^4 ,   \quad 
			h_i^5 =10\,o_i      \,;
			\end{equation}
			\begin{equation}\label{E:X2'}
			\tau_{ij} \cdot o_i =  \tau_{ij} \cdot h_i =\tau_{ij}\cdot c_i= 0, \quad \tau_{ij} \cdot \tau_{ij} =- 20\, o_i\cdot o_j
			\,;
			\end{equation}
			\begin{equation}\label{E:X3'}
			\tau_{ij} \cdot \tau_{ik} = \tau_{jk} \cdot o_i\,;
			\end{equation}
			\begin{equation}\label{E:X4'}
			\sum_{\sigma \in \mathfrak{S}_{22}} 
			\prod_{i=1}^{11} \tau_{\sigma(2i-1), \sigma(2i)} = 0\,. 
			\end{equation}
where $\lambda,\mu\in \QQ$ are certain constants, and $ \mathfrak{S}_{22}$ denotes the symmetric group on 22 elements.
To prove Corollary \ref{cor1}, it suffices to check that all these relations  are verified modulo rational equivalence. In view of the usual spread lemma \cite[Lemma 3.2]{Vo}, it is sufficient to check that this is the case for {\em ordinary\/} Gushel--Mukai fivefolds.

The relations \eqref{E:X'} take place in $R^\ast(Y)$ and so they follow from the Franchetta property for $Y$ (Corollary \ref{Frord}). 
The relations \eqref{E:X2'} take place in $R^\ast(Y^2)$. The first and the last relations are trivially verified, because ($Y$ being Fano) $A^{10}(Y^2)=\QQ$. As for the second relation of \eqref{E:X2'}, this follows from the Franchetta property for $Y\times Y$ (Corollary \ref{Frord}). 
   
 Relation \eqref{E:X3'} takes place in $R^\ast(Y^3)$ and follows from the MCK decomposition. Indeed, we have
   \[  \Delta_Y^{\rm sm}\circ (\pi^5_Y\times\pi^5_Y)=   \pi^{10}_Y\circ \Delta_Y^{\rm sm}\circ (\pi^5_Y\times\pi^5_Y)  \ \ \ \hbox{in}\ A^{10}(Y^3)\ ,\]
   which (using Lieberman's lemma) translates into
   \[ (\pi^5_Y\times \pi^5_Y\times\Delta_Y)_\ast    \Delta_Y^{\rm sm}  =   ( \pi^5_Y\times \pi^5_Y\times\pi^{10}_Y)_\ast \Delta_Y^{\rm sm}                            
                                \ \ \ \hbox{in}\ A^{10}(Y^3)\ ,\]
   which means that
   \[  \tau_{13}\cdot \tau_{23}= \tau_{12}\cdot o_3\ \ \ \hbox{in}\ A^{10}(Y^3)\ .\]
   
  Finally, relation \eqref{E:X4'}, which takes place in $R^\ast(Y^{22})$, can be related to the Kimura finite-dimensionality relation \cite{Kim}:
  relation \eqref{E:X4'} expresses the vanishing
    \[ \sym^{22} H^{5}(Y,\QQ)=0\ ,\]
    where $H^{5}(Y,\QQ)$ is seen as a super vector space.
 This relation is also verified modulo rational equivalence: indeed, relation \eqref{E:X4'} involves a cycle in
   \[ A^\ast(\sym^{22} h^5(Y))\ ,\]
  and $\sym^{22} h^5(Y)$ is $0$ because $Y$ is Kimura finite-dimensional (Proposition \ref{van}). 
   This ends the proof.
%
%
%
 \end{proof}

\begin{remark}\label{tava} Given any curve $C$ and an integer $m\in\NN$, one can define the {\em tautological ring\/}
  \[ R^\ast(C^m):=  \bigl\langle  (p_i)^\ast(K_C),(p_{ij})^\ast(\Delta_C)\bigr\rangle\ \ \ \subset\ A^\ast(C^m) \]
  (where $p_i, p_{ij}$ denote the various projections from $C^m$ to $C$ resp. $C\times C$).
  Tavakol has proven \cite[Corollary 6.4]{Ta} that if $C$ is a hyperelliptic curve, the cycle class map induces injections
    \[  R^\ast(C^m)\ \hookrightarrow\ H^\ast(C^m,\QQ)\ \ \ \hbox{for\ all\ }m\in\NN\ .\]
   On the other hand, there are many (non hyperelliptic) curves for which the tautological ring $R^\ast(C^3)$ does {\em not\/} inject into cohomology (this is related to the non-vanishing of the Ceresa cycle, cf. \cite[Remark 4.2]{Ta} and also \cite[Example 2.3 and Remark 2.4]{FLV2}).   
\end{remark}

%
%
%

 \vskip0.5cm
\begin{nonumberingt} Thanks to Lie Fu and Charles Vial for many inspiring exchanges concerning MCK. Thanks to Len for many beautiful drawings.
\end{nonumberingt}

\vskip0.5cm


\begin{thebibliography}{dlPG99}

\bibitem{An} Y. Andr\'e, Motifs de dimension finie (d'apr\`es S.-I. Kimura, P. O'Sullivan,...), S\'eminaire Bourbaki 2003/2004, Ast\'erisque 299 Exp. No. 929, viii, 115--145,

\bibitem{Beau3} A. Beauville, On the splitting of the Bloch--Beilinson filtration, in: Algebraic cycles and motives (J. Nagel and C. Peters, editors), London Math. Soc. Lecture Notes 344, Cambridge University Press 2007,

\bibitem{BV} A. Beauville and C. Voisin, On the Chow ring of a K3 surface, J. Alg. Geom. 13 (2004), 417---426,

\bibitem{BL} N. Bergeron and Z. Li, {Tautological classes on moduli space of hyperk\"ahler manifolds}, Duke Math. J., arXiv:1703.04733,
			
\bibitem{BFM} M. Bernardara, E. Fatighenti and L. Manivel, Nested varieties of K3 type, arXiv:1912.03144,			
			

\bibitem{BS} S. Bloch and V. Srinivas, Remarks on correspondences and algebraic cycles, American Journal of Mathematics Vol. 105, No 5 (1983), 1235---1253,

\bibitem{D} O. Debarre, Gushel--Mukai varieties, arXiv:2001.03485,

\bibitem{DK} O. Debarre and A. Kuznetsov, Gushel--Mukai varieties: classification and birationalities, Alg. Geometry 5 (2018), 15---76,

\bibitem{DK1} O. Debarre and A. Kuznetsov, Gushel--Mukai varieties: linear spaces and periods, Kyoto J. Math. 59 (2019), 857---953,

\bibitem{DK3} O. Debarre and A. Kuznetsov, Gushel--Mukai varieties: moduli, to appear in Internat. J. Math.,

\bibitem{DK2} O. Debarre and A. Kuznetsov, Gushel--Mukai varieties: intermediate Jacobians, arXiv:2002.05620,

\bibitem{DK4} O. Debarre and A. Kuznetsov, Double covers of quadratic degeneracy and Lagrangian intersection loci, to appear in Math. Ann.,

\bibitem{3264} D. Eisenbud and J. Harris, $3264$ and all that: a second course in algebraic geometry, Cambridge University Press, Cambridge 2016, 
	
\bibitem{Diaz} H. Diaz, The Chow ring of a cubic hypersurface, to appear in International Math. Research Notices,

\bibitem{Dol} I. Dolgachev, {Classical algebraic geometry. A modern view}, Cambridge University Press, Cambridge, 2012,

\bibitem{FM0} E. Fatighenti and G. Mongardi, A note on a Griffiths-type ring for complete intersections in Grassmannians, arXiv:1801.09586,


\bibitem{FLV} L. Fu, R. Laterveer and Ch. Vial, The generalized Franchetta conjecture for some hyper-K\"ahler varieties (with an appendix joint with M.
Shen), Journal Math. Pures et Appliqu\'ees (9) 130 (2019), 1---35,

\bibitem{FLV3} L. Fu, R. Laterveer and Ch. Vial, The generalized Franchetta conjecture for some hyper-K\"ahler varieties, II, Journal de l'Ecole Polytechnique--Math\'ematiques 8 (2021), 1065---1097,

\bibitem{FLV2} L. Fu, R. Laterveer and Ch. Vial, Multiplicative Chow--K\"unneth decompositions and varieties of cohomological K3 type, Annali Mat. Pura ed Applicata, doi: 10.1007/s10231-021-01070-0,

\bibitem{FTV} L. Fu, Z. Tian and Ch. Vial, Motivic hyperk\"ahler resolution conjecture for generalized Kummer varieties, arXiv:1608.04968,

\bibitem{F} W. Fulton, Intersection theory, Springer--Verlag Ergebnisse der Mathematik, Berlin Heidelberg New York Tokyo 1984,



		

\bibitem{J4} U. Jannsen, On finite-dimensional motives and Murre's conjecture, in: Algebraic cycles and motives (J. Nagel and C. Peters, editors), Cambridge University Press, Cambridge 2007,



\bibitem{Kim} S.-I.~Kimura, Chow groups are finite dimensional, in some sense, Math. Ann. 331 no 1 (2005), 173---201, 

\bibitem{Kim2} S.-I. Kimura, Surjectivity of the cycle map for Chow motives, in: Motives and algebraic cycles, Fields Inst. Commun., vol. 56, Amer. Math. Soc., Providence, RI, 2009, pp. 157---165,



\bibitem{KP} A. Kuznetsov and A. Perry, Derived categories of Gushel--Mukai varieties, Compositio Mathematica 154 (2018), 1362---1406,
	



\bibitem{4} R. Laterveer, Algebraic varieties with small Chow groups, Journal Math. Kyoto Univ. Vol. 38 no. 4 (1998), 673---694,

\bibitem{Lfam} R. Laterveer, A family of cubic fourfolds with finite-dimensional motive, Journal of the Mathematical Society of Japan 70 no. 4 (2018), 
1453---1473,

\bibitem{37} R. Laterveer, A remark on the Chow ring of K\"uchle fourfolds of type $d3$, Bulletin Australian Math. Soc. 100 no. 3 (2019), 410---418,

\bibitem{39} R. Laterveer, Algebraic cycles and Verra fourfolds, Tohoku Math. J. 72 no. 3 (2020), 451---485,

\bibitem{40} R. Laterveer, On the Chow ring of certain Fano fourfolds, Ann. Univ. Paedagog. Crac. Stud. Math. 19 (2020), 39---52,

\bibitem{S2} R. Laterveer, On the Chow ring of Fano varieties of type $S2$, Abhandlungen Mat. Sem. Univ. Hamburg 90 (2020), 17---28,



\bibitem{hyp} R. Laterveer, On the Chow groups of Pl\"ucker hypersurfaces in Grassmannians, Arch. Math. 116 (2021), 281---292,

\bibitem{LNP} R. Laterveer, J. Nagel and C. Peters, On complete intersections in varieties with finite-dimensional motive, Quarterly Journal of Math. 70 no. 1 (2019), 71---104,

\bibitem{38} R. Laterveer and Ch. Vial, On the Chow ring of Cynk--Hulek Calabi--Yau varieties and Schreieder varieties, Canadian Journal of Math. 72 no 2 (2020), 505---536,


\bibitem{Mur} J. Murre, On a conjectural filtration on the Chow groups of an algebraic variety, parts I and II, Indag. Math. 4 (1993), 177---201,

\bibitem{MNP} J. Murre, J. Nagel and C. Peters, Lectures on the theory of pure motives, Amer. Math. Soc. University Lecture Series 61, Providence 2013,

\bibitem{Nag} J. Nagel, The generalized Hodge conjecture for the quadratic complex of lines in projective four-space, Math. Ann. 312 (1998), 387---401,

\bibitem{NOY} A. Negut, G. Oberdieck and Q. Yin, Motivic decompositions for the Hilbert scheme of points of a K3 surface, arXiv:1912.09320v1,


\bibitem{PSY} N. Pavic, J. Shen and Q. Yin, On O'Grady's generalized Franchetta conjecture, Int. Math. Res. Notices (2016), 1---13,

\bibitem{Per} L. Pertusi, A survey on Gushel--Mukai varieties, available from http://www.mat.unimi.it/users/pertusi/,


\bibitem{Pet} C. Peters, On a motivic interpretation of primitive, variable and fixed cohomology, Math. Nachrichten 292 no. 2 (2019), 402---408,

\bibitem{Roth} L. Roth, Some properties of Grassmannians, Univ. Roma Ist. Naz. Alta Mat. Rend. Mat. e Appl. 5 no. 10 (1951), 96---114,

\bibitem{Sc} T. Scholl, Classical motives, in: Motives (U. Jannsen et alii, eds.), Proceedings of Symposia in Pure Mathematics Vol. 55 (1994), Part 1, 


\bibitem{Seg} B. Segre, Studio dei complessi quadratici di rette di $S4$, Atti Ist. Veneto 88 (1929), 595---649,

\bibitem{Sem} J. Semple, On quadric representations of the lines of four-dimensional space, Proc. London Math. Soc. 30 (1930), 500---512,

\bibitem{SV} M. Shen and Ch. Vial, The Fourier transform for certain hyperK\"ahler fourfolds, Memoirs of the AMS 240 (2016), no.1139,

\bibitem{SV2} M. Shen and Ch. Vial, The motive of the Hilbert cube $X^{[3]}$, Forum Math. Sigma 4 (2016), 55 pp., 

\bibitem{Ta2} M. Tavakol, The tautological ring of the moduli space $M_2^{rt}$, International Math. Research Notices 178 (2013), 

\bibitem{Ta} M. Tavakol, Tautological classes on the moduli space of hyperelliptic curves with rational tails, J. Pure Applied Algebra 222 no 8 (2018),
2040---2062,

\bibitem{V10} Ch. Vial, Pure motives with representable Chow groups, C. R. Math. Acad. Sci. Paris 348 (2010), no. 21-22, 1191---1195, 

\bibitem{43} Ch. Vial, Projectors on the intermediate algebraic Jacobians, New York J. Math. 19 (2013), 793---822,

\bibitem{V4} Ch. Vial, Niveau and coniveau filtrations on cohomology groups and Chow groups, Proceedings of the LMS 106(2) (2013), 410---444,



\bibitem{V6} Ch. Vial, On the motive of some hyperk\"ahler varieties, J. Reine Angew. Math. 725 (2017), 235---247,

\bibitem{V0} C.Voisin,The generalized Hodge and Bloch conjectures are equivalent for general complete intersections, Ann. Sci. Ecole Norm. Sup. 46, fascicule 3 (2013), 449---475,

\bibitem{V1} C. Voisin, The generalized Hodge and Bloch conjectures are equivalent for general complete intersections, II, J. Math. Sci. Univ. Tokyo  22 (2015), 491---517,

\bibitem{Vo} C. Voisin, Chow Rings, Decomposition of the Diagonal, and the Topology of Families, Princeton University Press, Princeton and Oxford, 2014,

\bibitem{Yin} Q.~Yin, Finite-dimensionality and cycles on powers of K3 surfaces, Comment. Math. Helv. 90 (2015), 503--511.


\end{thebibliography}
\end{document}